\newcommand{\la}{\langle}
\newcommand{\ra}{\rangle}
\renewcommand{\Re}{\operatorname{Re}}
\renewcommand{\Im}{\operatorname{Im}}
\newcommand{\supp}{\operatorname{supp}}
\newtheorem{theorem}{Theorem}
\newtheorem{proposition}[theorem]{Proposition}
\newtheorem{lemma}[theorem]{Lemma}
\newtheorem{corollary}[theorem]{Corollary}
\theoremstyle{remark}
\newtheorem{remark}[theorem]{Remark}
\numberwithin{equation}{section}
\numberwithin{theorem}{section}
\numberwithin{table}{section}
\numberwithin{figure}{section}
\title{A Spectral Multiplier Theorem associated with a Schr\"odinger Operator}
\date{\today}
\author{Younghun Hong}
\address{University of Texas at Austin}
\email{yhong@math.utexas.edu}
\begin{document}

\maketitle

\begin{abstract}
We establish a H\"ormander type spectral multiplier theorem for a Schr\"odinger operator $H=-\Delta+V(x)$ in $\mathbb{R}^3$, provided $V$ is contained in a large class of short range potentials. This result does not require the Gaussian heat kernel estimate for the semigroup $e^{-tH}$, and indeed the operator $H$ may have negative eigenvalues. As an application, we show local well-posedness of a 3d quintic nonlinear Schr\"odinger equation with a potential.
\end{abstract}

\smallskip
\noindent \textbf{Keywords.} Spectral multiplier theorem, Schr\"odinger operator, Schr\"odinger equation.

\smallskip
\noindent \textbf{AMS subject classifications.} 42B15, 42B20, 35Q55.

\section{Introduction}

\subsection{Statement of the main theorem}
In this paper, we establish a H\"ormander type spectral multiplier theorem for a Schr\"odinger operator $H=-\Delta+V$ in $\mathbb{R}^3$, provided that $V$ is contained in a large class of short range potentials. Precisely, we assume that $V$ is contained in $\mathcal{K}_0\cap L^{3/2,\infty}$, where $\mathcal{K}_0$ is the norm closure of bounded, compactly supported functions with respect to the \textit{global Kato norm}
\begin{equation}\label{global Kato norm}
\|V\|_{\mathcal{K}}:=\sup_{x\in\mathbb{R}^3} \int_{\mathbb{R}^3}\frac{|V(y)|}{|x-y|}dy,
\end{equation}
and $L^{3/2,\infty}$ is the weak $L^{3/2}$-space. We also assume that $H$ has no eigenvalue or resonance on the positive real-line $[0,+\infty)$. By a \textit{resonance}, we mean a complex number $\lambda$ such that the equation $\psi+(-\Delta-\lambda\pm i0)^{-1}V\psi=0$ has a slowly decaying solution $\psi\in L^{2,-s}\setminus L^2$ for any $s>\frac{1}{2}$, where $L^{2,s}=\{\la x\ra^s f\in L^2\}$.

By the above assumptions, the operator $H$ is self-adjoint on $L^2$. Moreover, its spectrum $\sigma(H)$ consists of purely absolutely continuous spectrum on the positive real-line $[0,+\infty)$ and at most finitely many negative eigenvalues \cite{BG}. Therefore, for a bounded Borel function $m:\sigma(H)\subset\mathbb{R}\to\mathbb{C}$, one can define a spectral multiplier $m(H)$ as a bounded operator on $L^2$ via functional calculus. 

A natural question is then to find a sufficient condition to extend boundedness of the multiplier $m(H)$ to $L^p$ for $p\neq 2$. Such a condition is typically given in terms of regularity of symbols. To measure regularity of a symbol $m:\sigma(H)\to\mathbb{C}$, we define a Sobolev type norm by
\begin{equation}\label{Hs}
\|m\|_{\mathcal{H}(s)}:=\sum_{\lambda_j:\textup{ negative eigenvalues}} |m(\lambda_j)|+\sup_{t>0}\|\chi(\lambda)m((t\lambda)^2)\|_{W_\lambda^{s,2}((0,+\infty))},
\end{equation}
where $\chi\in C_c^\infty(\mathbb{R})$ is a standard dyadic partition of unity function such that $\chi$ is supported in $[\frac{1}{2},2]$ and $\sum_{N\in 2^{\mathbb{Z}}}\chi(\tfrac{\cdot}{N})\equiv 1$ on $(0,+\infty)$, and $W^{s,2}$ is  the $L^2$-Sobolev space of order $s$.

Our main result is the following.

\begin{theorem}[Spectral multiplier theorem]\label{main theorem}
Suppose that $V\in \mathcal{K}_0\cap L^{3/2,\infty}$ and $H=-\Delta+V$ has no eigenvalue or resonance on $[0,+\infty)$. We also assume that for $s>2$, the symbol $m:\sigma(H)\to\mathbb{C}$ satisfies $\|m\|_{\mathcal{H}(s)}<\infty$. Then, we have
\begin{equation}
\|m(H)\|_{L^p\to L^p}\lesssim \|m\|_{\mathcal{H}(s)},\quad\forall 1<p<\infty.
\end{equation}
\end{theorem}

When $V=0$, Theorem \ref{main theorem} is simply the classical H\"ormander-Mikhlin multiplier theorem \cite{C}.

There are several ways to prove the spectral multiplier theorem for Schr\"odinger operators. For an operator $A$, we say that the semigroup $e^{-tA}$ satisfies the Gaussian heat kernel estimate if the kernel of $e^{-tA}$, denoted by $e^{-tA}(x,y)$, obeys
\begin{equation}\label{Gaussian heat kernel}
e^{-tA}(x,y)\lesssim t^{-3/2}e^{-\frac{|x-y|^2}{ct}},\quad\forall t>0
\end{equation}
for some $c>0$. Gaussian upper bounds for the heat kernels have been used successfully to prove spectral multiplier theorems for rather general operators, not necessarily Schr\"odinger operators (see \cite{C, MM, DOS} and references therein). In the case of the Schr\"odinger operator $H=-\Delta+V$ in $\mathbb{R}^3$, if $V_+=\max(V,0)$ is in local Kato class, that is,
\begin{equation}
\lim_{r\to0+}\sup_{x\in\mathbb{R}^3}\int_{|x-y|\leq r}\frac{|V_+(y)|}{|x-y|}dy=0,
\end{equation}
and if $V_-=\min(V,0)\in\mathcal{K}_0$ and $\|V_-\|_{\mathcal{K}}<4\pi$, then it is known that the semigroup $e^{-tH}$ satisfies the Gaussian heat kernel estimate \eqref{Gaussian heat kernel} \cite{T, DP}. The spectral multiplier theorem for $H$ then follows from \cite[Theorem 3.1]{DOS}. However, for Gaussian upper bounds \eqref{Gaussian heat kernel}, operators need to be positive definite, while the Schr\"odinger operator in Theorem \ref{main theorem} may have negative eigenvalues.

One can also use the wave operators to show the spectral multiplier theorem. The forward-in-time (backward-in-time, resp) wave operator of the Schr\"odinger operator $H=-\Delta+V$ is defined by
\begin{equation}
W_+=\underset{t\to+\infty}{s\mbox{-}\lim}e^{itH}e^{-it(-\Delta)}\quad\Big(W_-=\underset{t\to-\infty}{s\mbox{-}\lim}e^{itH}e^{-it(-\Delta)}\textup{, resp}\Big).
\end{equation}
An important feature of wave operators is its intertwining property, that is, $P_cf(H)=W_\pm f(-\Delta)(W_\pm)^*$, where $P_c$ is the spectral projection to the continuous spectrum and $(W_\pm)^*$ is the dual of $W_\pm$. In \cite{Y}, Yajima proved that the wave operators $W_\pm$ are bounded on $L^p$ for all $1\leq p\leq\infty$, provided that $|V(x)|\lesssim\la x\ra^{-5-\epsilon}$ for $\epsilon>0$, and zero is not an eigenvalue or a resonance of $H$. Later, in \cite{B}, Beceanu extended this result to a larger space
\begin{equation}
B:=\Big\{V: \sum_{k=-\infty}^\infty 2^{k/2}\|V(x)\|_{L_x^2(2^k\leq|x|<2^{k+1})}<\infty\Big\}.
\end{equation}
The spectral multiplier theorem then follows immediately from the intertwining property and boundedness of wave operators and the classical H\"ormander-Mikhlin multiplier theorem, since
\begin{equation}
\begin{aligned}
\|P_cf(H)\|_{L^p\to L^p}&=\|W_\pm f(-\Delta)(W_\pm)^*\|_{L^p\to L^p}\\
&\lesssim \|f(-\Delta)(W_\pm)^*\|_{L^p\to L^p}\lesssim \|(W_\pm)^*\|_{L^p\to L^p}<\infty
\end{aligned}
\end{equation}
and $(I-P_c)f(H)$ is bounded on $L^p$ by Lemma \ref{eigenfunctions}. Theorem \ref{main theorem} improves the spectral multiplier theorem as a consequence boundedness of the wave operator, in that the potential class $\mathcal{K}_0\cap L^{3/2,\infty}$ is larger than the potential class $B$. Note that a potential having many singular points, such as $\sum_{k=1}^N 1_{|x-x_j|\leq 1}\frac{1}{|x-x_j|^{2-\epsilon}}$ with $x_j\neq x_k$ and $\epsilon>0$, is contained in $\mathcal{K}_0\cap L^{3/2,\infty}$, but not in $B$.

Our proof of the spectral multiplier theorem is perturbative, and it relies heavily on the explicit integral representation of the kernel of the multiplier. We consider the spectral multiplier $m(H)P_c$ as a perturbation of the Fourier multiplier $m(-\Delta)$, and then we show that the difference $(m(H)P_c-m(-\Delta))$ is bounded on $L^p$. In order to estimate the difference, we first decompose it into its dyadic pieces
\begin{equation}
\sum_{N\in 2^{\mathbb{Z}}}\chi(\tfrac{\sqrt{H}}{N})\Big(m(H)-m(-\Delta)\Big),
\end{equation}
where $\chi$ is the function given in \eqref{Hs}. Then, we generate a formal series expansion for each dyadic piece to get explicit integral representations of kernels of terms in the series using the free resolvent formula 
\begin{equation}\label{free resolvent formula}
((-\Delta-z)^{-1}f)(x)=\int_{\mathbb{R}^3}\frac{e^{i\sqrt{z}|x-y|}}{4\pi|x-y|} f(y)dy.
\end{equation}
We estimate these integral kernels. Summing them up, we prove the spectral multiplier theorem.

A key observation is that in spite of the singular integral nature of both $m(H)P_c$ and $m(-\Delta)$ as Calderon-Zygmund operators, the kernel of their difference is less singular than usual Calderon-Zygmund operators. This fact is essential in our analysis, since it allows us to avoid using the delicate classical Calderon-Zygmund theory for the complicated operator $m(H)$ (see Remark 4.4). Instead, we just make use of the fractional integration inequality and H\"older inequality.

\subsection{Application to NLS}
The choice of the potential class in the main theorem is motivated by the following nonlinear application.

First, we recall the Strichartz estimates for the linear propagator $e^{-itH}$.
\begin{proposition}[Strichartz estimates]\label{prop:StrichartzEstimate}
If $V\in\mathcal{K}_0$ and $H$ has no eigenvalue or resonance on $[0,+\infty)$, then
\begin{align}
\|e^{-itH}P_cf\|_{L_t^q L_x^r}&\lesssim\|f\|_{L^2},\\
\Big\|\int_0^t e^{-i(t-s)H}P_cF(s)ds\Big\|_{L_t^q L_x^r}&\lesssim\|F\|_{L_t^{2}L_x^{6/5}},
\end{align}
where $\frac{2}{q}+\frac{3}{r}=\frac{3}{2}$ and $2\leq q,r\leq\infty$.
\end{proposition}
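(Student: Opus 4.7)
The plan is to reduce both bounds to the pointwise dispersive estimate
\[
\|e^{-itH}P_c f\|_{L^\infty_x}\lesssim |t|^{-3/2}\|f\|_{L^1_x},\qquad t\neq 0,
\]
which, combined with the trivial unitary bound $\|e^{-itH}P_c\|_{L^2\to L^2}\le 1$, feeds into the abstract Keel--Tao Strichartz machinery to produce the homogeneous estimate for every admissible pair $(q,r)$. The stated inhomogeneous bound then follows by a standard $TT^{*}$/Christ--Kiselev argument applied to the retarded Duhamel integral, pairing an admissible homogeneous estimate on the left with the $L^2$ bound on the inhomogeneity.

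To establish the dispersive estimate I would start from Stone's formula
\[
e^{-itH}P_c f = \frac{1}{\pi i}\int_0^\infty e^{-it\lambda^2}\lambda\bigl[R_V(\lambda^2+i0)-R_V(\lambda^2-i0)\bigr]f\,d\lambda
\]
and substitute the Born series expansion
\[
R_V(\lambda^2\pm i0) = \sum_{n=0}^\infty (-1)^n R_0(\lambda^2\pm i0)\bigl[V R_0(\lambda^2\pm i0)\bigr]^n,
\]
which is the organizing principle of this paper. The explicit free-resolvent kernel $R_0(\lambda^2\pm i0)(x,y)=\frac{e^{\pm i\lambda|x-y|}}{4\pi|x-y|}$ turns each term of the series into an oscillatory integral in $\lambda$ whose quadratic phase has a nondegenerate stationary point, so stationary phase (van der Corput) produces the $|t|^{-3/2}$ decay uniformly in the spatial variables. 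The remaining $n$ iterated convolutions against $|x-y|^{-1}$ are controlled by the global Kato norm $\|V\|_{\mathcal K}$, and a suitable dyadic frequency cutoff together with rescaling makes the effective norm small, so the series can be summed geometrically at medium and high frequencies.

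The main obstacle will be the low-frequency region $\lambda\approx 0$, where the Born series generically fails to converge termwise. There one must invert $I+R_0(0+i0)V$ on a suitable weighted $L^2$ space, and this is precisely where the hypotheses that $H$ has no eigenvalue or resonance on $[0,+\infty)$ enter: via a Fredholm / Wiener-type inversion lemma of the sort used in the limiting absorption principle for Kato-class potentials, the formal geometric series is replaced by an honest bounded inverse. The low-frequency piece then reduces to the same stationary-phase bookkeeping as above. Assembling the three frequency contributions yields the dispersive estimate and closes the argument.
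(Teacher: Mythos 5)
Your reduction is exactly the paper's proof: the paper handles this proposition in two lines by quoting the Beceanu--Goldberg dispersive estimate $\|e^{-itH}P_c\|_{L^1\to L^\infty}\lesssim|t|^{-3/2}$ from \cite{BG} and then invoking the Keel--Tao machinery \cite{KT}. If you do the same, you are done; note also that the stated inhomogeneous bound is of dual-pair (retarded Duhamel) form, so it comes directly out of Keel--Tao and no separate Christ--Kiselev step is needed.

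The gap is in your plan to reprove the dispersive estimate itself. For the scaling-critical class $\mathcal{K}_0$ the termwise ``Born series $+$ stationary phase $+$ geometric summation'' scheme does not run as described. First, $\|VR_0^+(\lambda)\|_{L^1\to L^1}\le\|V\|_{\mathcal{K}}/4\pi$ is bounded but in general \emph{not} small, and it does not decay as $\lambda\to\infty$; smallness at high frequency is only recovered for an iterated power $(VR_0^+(\lambda))^4$ after splitting $V$ into a bounded compactly supported piece plus a piece of small Kato norm (this is precisely Lemma 3.1(iii) of the paper), and even then only in kernel/operator norms, not with the pointwise weights needed to keep the $|t|^{-3/2}$ decay termwise. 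Second, the medium-frequency regime is not handled by ``rescaling makes the effective norm small'' at all: there one needs the uniform invertibility and continuity of $(I+VR_0^+(\lambda))^{-1}$, which rests on the absence of embedded eigenvalues/resonances and a compactness argument, not on geometric smallness. Third, assembling these pieces into a uniform $|t|^{-3/2}$ bound for all of $\mathcal{K}_0$ is exactly the main theorem of \cite{BG}, whose proof proceeds through an abstract Wiener-algebra/structure argument rather than naive summation of the Born series; your sketch compresses a research-level result into a paragraph. So either cite \cite{BG}, as the paper does, in which case your argument is complete, or be prepared for substantially more work than the outline indicates.
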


\begin{proof} Beceanu-Goldberg \cite{BG} proved the dispersive estimate
\begin{equation}\label{dispersive estimate}
\|e^{-itH}P_c\|_{L^1\to L^\infty}\lesssim|t|^{-3/2},
\end{equation}
where $P_c$ is the spectral projection to the continuous spectrum. Strichartz estimates then follow by the argument of Keel-Tao \cite{KT}.
\end{proof}

\begin{remark}
The dispersive estimate of the form \eqref{dispersive estimate} was first proved by Journ\'e-Soffer-Sogge under suitable assumptions on potentials \cite{JSS} . The assumptions have been relaxed by Rodnianksi-Schlag \cite{RSch}, Goldberg-Schlag \cite{GS1} and Goldberg \cite{G1, G2}. Recently, Beceanu-Goldberg established \eqref{dispersive estimate} for a scaling-critical potential class $\mathcal{K}_0$ \cite{BG}.
\end{remark}

An interesting question is then whether one can use the above Strichartz estimates to show the local well-posedness (LWP), for instance, for a 3d quintic nonlinear Schr\"odinger equation with a potential
\begin{equation}\tag{$(\textup{NLS}_V)$}
iu_t+\Delta u-Vu\pm|u|^4u=0;\ u(0)=u_0
\end{equation}
assuming that $V$ satisfies the conditions in Proposition \ref{prop:StrichartzEstimate}. However, if one tries to show local well-posedness by the standard contraction mapping argument as in \cite{C,Tao1}, one will realize that there is a subtle problem, mainly because the linear propagator $e^{-itH}$ does not commute with the differential operators from the Sobolev norms.

We overcome this subtle problem by the two norm estimates lemma, whose proof relies on the spectral multiplier theorem.
\begin{lemma}[Two norm estimates] If $V\in\mathcal{K}_0\cap L^{3/2,\infty}$ and $H$ has no eigenvalue or resonance on the positive real-line $[0,+\infty)$, then
\begin{align}
\|H^\frac{s}{2}P_c(-\Delta)^{-\frac{s}{2}}f\|_{L^r}&\lesssim\|f\|_{L^r},\\
\|(-\Delta)^{\frac{s}{2}}H^{-\frac{s}{2}}P_cf\|_{L^r}&\lesssim\|f\|_{L^r}.
\end{align}
for $0\leq s\leq 2$ and $1<r<\frac{3}{s}$.
\end{lemma}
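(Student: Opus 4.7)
My plan is to deduce both bounds by Stein complex interpolation, feeding in imaginary-power estimates that come directly from Theorem \ref{MainTheorem}. For $m_\tau(\lambda) = \lambda^{i\tau}$, the rescaling $\lambda \mapsto (t\lambda)^2$ contributes only a unimodular factor $t^{2i\tau}$, so $\|m_\tau\|_{\mathcal{H}(6)}$ is bounded independently of $t$ and grows only polynomially in $|\tau|$. Theorem \ref{MainTheorem} therefore yields
$$\|H^{i\tau} P_c\|_{L^r \to L^r} \lesssim (1+|\tau|)^{6}, \qquad 1 < r < \infty,$$
while the classical Mikhlin multiplier theorem gives the same polynomial bound for $(-\Delta)^{i\tau}$.

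For the first inequality I would apply Stein interpolation to the analytic family $T_z := H^{z/2} P_c (-\Delta)^{-z/2}$ on the strip $0 \le \Re z \le 2$. At $\Re z = 0$ the operator is a composition of two imaginary-power operators and is thus bounded on every $L^p$, $1<p<\infty$, with polynomial growth in $|\Im z|$. At the right endpoint I would use that $P_c$ commutes with $H = -\Delta + V$ to factor
$$T_{2+i\tau} = H^{i\tau/2} P_c \cdot \bigl(P_c + P_c V (-\Delta)^{-1}\bigr) \cdot (-\Delta)^{-i\tau/2},$$
so the work reduces to bounding the middle operator on $L^r$ for $1 < r < 3/2$. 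The projection $P_c$ is bounded on every $L^r$ because $I - P_c$ is a finite-rank projection onto smooth, exponentially decaying bound states of the Kato-class operator $H$. Hardy--Littlewood--Sobolev combined with H\"older's inequality in Lorentz spaces (using $V \in L^{3/2,\infty}$) then gives $V(-\Delta)^{-1}$ bounded on $L^r$ for the same range. Stein interpolation of the two endpoint bounds delivers the first estimate on $L^r$ for every $1 < r < 3/s$, which is precisely the intermediate range carved out by the two endpoints.

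For the reverse inequality I would repeat the interpolation with the analytic family $\tilde T_z := (-\Delta)^{z/2} H^{-z/2} P_c$. The $\Re z = 0$ endpoint is identical, and at $\Re z = 2$ one factors through $(-\Delta) H^{-1} P_c = P_c - V H^{-1} P_c$. Controlling $V H^{-1} P_c$ on $L^r$ for $1 < r < 3/2$ amounts to a Hardy--Littlewood--Sobolev-type mapping $H^{-1} P_c : L^r \to L^{r^*}$ with $1/r^* = 1/r - 2/3$, which I would derive from the resolvent identity $H^{-1} P_c = (-\Delta)^{-1} - (-\Delta)^{-1} V H^{-1} P_c$ together with invertibility of $I + (-\Delta)^{-1} V$ on the natural image space---this is exactly where the hypothesis that $H$ has no eigenvalue or resonance at zero enters. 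The main obstacle is this last step: showing that $H^{-1} P_c$ inherits the $|x-y|^{-1}$ kernel decay of $(-\Delta)^{-1}$ in the scaling-critical class $\mathcal{K}_0 \cap L^{3/2,\infty}$, which forces one to import the low-energy resolvent analysis of \cite{BG} rather than to rely purely on the functional calculus provided by Theorem \ref{MainTheorem}.
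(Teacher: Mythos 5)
Your proposal follows essentially the same route as the paper: imaginary-power bounds $\|H^{i\tau}P_c\|_{L^r\to L^r}\lesssim\la\tau\ra^6$ from Theorem \ref{MainTheorem}, Stein complex interpolation on the strip, and endpoint bounds obtained by writing $HP_c(-\Delta)^{-1}=P_c+P_cV(-\Delta)^{-1}$ and $(-\Delta)H^{-1}P_c=P_c-VH^{-1}P_c$ with H\"older in Lorentz spaces against $V\in L^{3/2,\infty}$. The one ingredient you flag as the "main obstacle," namely the Sobolev-type bound $\|H^{-1}P_cg\|_{L^{\frac{3r}{3-2r},r}}\lesssim\|g\|_{L^r}$, is exactly what the paper imports as a citation (the Sobolev inequality associated with $H$, \cite[Theorem 1.9]{H1}) rather than deriving it from the zero-energy resolvent analysis as you sketch, so your plan is sound and no further repair is needed beyond invoking that result.
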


Together with Strichartz estimates and the two norm estimates lemma, we prove local well-posedness.

\begin{theorem}[LWP] Suppose that $V\in\mathcal{K}_0\cap L^{3/2,\infty}$ and $H$ has no eigenvalue or resonance on the positive real-line $[0,+\infty)$. Then, $(\textup{NLS}_V)$ is locally well-posed in $\dot{H}^1$.
\end{theorem}

\begin{remark}
$(i)$ The range of $r$ in the two norm estimates lemma is sharp. See the counterexample in \cite{Sh}.\\
$(ii)$ The additional hypothesis $V\in L^{3/2,\infty}$, compared to Strichartz estimates, is from the two norm estimates lemma. In the proof of the two norm estimates lemma, we used this additional assumption. 
\end{remark}

\subsection{Organization of the paper} The outline of the proof of Theorem \ref{main theorem} is given in \S 2. We decompose the spectral representation of the difference $(m(H)P_c-m(-\Delta))$ into the low, medium and high frequencies, and then analyze them separately in \S 4-6. In \S7, we establish LWP of a 3d quintic nonlinear Schr\"odinger equation with a potential.

\subsection{Notations} 
For an integral operator $T$, its integral kernel is denoted by $T(x,y)$. We denote by $A``="B$ the formal identity which  will be proved later. 

\subsection{Acknowledgement} The author would like to thank his advisor, Justin Holmer, for his help and encouragement. He also thank an anonymous referee for very helpful suggestions to improve this article.

\section{Reduction to the Key Lemma}

Suppose that $V\in\mathcal{K}_0$ and $H$ has no eigenvalue or resonance on $[0,+\infty)$. Then, the spectrum of $H$, denoted by $\sigma(H)$, consists of purely continuous spectrum on the positive real-line $[0,+\infty)$ and at most finitely many negative eigenvalues. For $z\notin\sigma(H)$, we define the resolvent by $R_V(z):=(H-z)^{-1}$, and denote
\begin{equation}
R_V^\pm(\lambda):=\underset{{\epsilon\to0+}}{\textup{s-}\lim} R_V(\lambda\pm i\epsilon).
\end{equation}
Let $P_c$ be the spectral projection on the continuous spectrum. Then, by the Stone's formula, the spectral multiplier operator $m(H)P_c$ is represented by
\begin{equation}
m(H)P_c=\frac{1}{2\pi i}\int_0^\infty m(\lambda) [R_V^+(\lambda)-R_V^-(\lambda)]d\lambda=\frac{1}{\pi}\int_0^\infty m(\lambda)\Im R_V^+(\lambda)d\lambda.
\end{equation}
Applying the identity 
\begin{equation}
\begin{aligned}
R_V^+(\lambda)&=R_0^+(\lambda)(I+VR_0^+(\lambda))^{-1}\\
&=R_0^+(\lambda)\Big(I-(I+VR_0^+(\lambda))^{-1}VR_0^+(\lambda)\Big)\\
&=R_0^+(\lambda)-R_0^+(\lambda)(I+VR_0^+(\lambda))^{-1}VR_0^+(\lambda),
\end{aligned}
\end{equation}
we split $m(H)P_c$ into the pure and the perturbed parts,
\begin{equation}
\begin{aligned}
m(H)P_c&=\frac{1}{\pi}\int_0^\infty m(\lambda)\Im R_0^+(\lambda) d\lambda\\
&-\frac{1}{\pi}\int_0^\infty m(\lambda)\Im [R_0^+(\lambda)(I+VR_0^+(\lambda))^{-1}VR_0^+(\lambda)] d\lambda\\
&=:m(-\Delta)+\textup{Pb},
\end{aligned}
\end{equation}
where $m(-\Delta)$ is the Fourier multiplier such that $\widehat{m(-\Delta) f}(\xi)=m(|\xi|^2)\hat{f}(\xi)$. For the pure part $m(-\Delta)$, it follows from the classical H\"ormander-Mikhlin multiplier theorem \cite{Hor} that for $s>\frac{3}{2}$,
\begin{equation}
\|m(-\Delta)\|_{L^p\to L^p}\lesssim \|m\|_{\mathcal{H}(s)},\quad\forall 1<p<\infty.
\end{equation}
Therefore, it suffices to show boundedness of the perturbed part. For the perturbed part $\textup{Pb}$, we further decompose it into dyadic pieces. Let $\chi$ be the smooth dyadic partition of unity function chosen in \eqref{Hs}, and decompose 
\begin{equation}
\textup{Pb}=\sum_{N\in 2^{\mathbb{Z}}}\textup{Pb}_N,
\end{equation}
where
\begin{equation}\label{PbN}
\textup{Pb}_N:=-\frac{1}{\pi}\int_0^\infty m(\lambda)\chi_N(\sqrt{\lambda})\Im[R_0^+(\lambda)(I+VR_0^+(\lambda))^{-1}VR_0^+(\lambda)]d\lambda.
\end{equation}
For a small dyadic number $N_0$ and a large dyadic number $N_1$ to be chosen later, we denote the low (high, resp) frequency part by 
\begin{equation}
\textup{Pb}_{\leq N_0}:=\sum_{N\leq N_0}\textup{Pb}_N\ \Big(\textup{Pb}_{\geq N_1}:=\sum_{N\geq N_1}\textup{Pb}_N\textup{, resp}\Big).
\end{equation}

In the next four sections, we will show the following lemma.
\begin{lemma}[Key lemma]\label{Key} Suppose that $V\in\mathcal{K}_0\cap L^{3/2,\infty}$ and $H$ has no eigenvalue or resonance on $[0,+\infty)$. Let $s>2$. Then, there exists $p>1$ but sufficiently close to $1$ such  that the following hold.\\
$(i)$ (High frequency) There exists $N_1=N_1(V)\gg1$ such that 
\begin{equation}
\|\textup{Pb}_{\geq N_1}\|_{L^{p,1}\to L^{p,\infty}}\lesssim \|m\|_{\mathcal{H}(s)},
\end{equation}
where $L^{p,1}$ and $L^{p,\infty}$ are the Lorentz spaces (see Appendix A).\\
$(ii)$ (Low frequency) There exists $N_0=N_0(V)\ll1$ such that 
\begin{equation}
\|\textup{Pb}_{\leq N_0}\|_{L^{p,1}\to L^{p,\infty}}\lesssim \|m\|_{\mathcal{H}(s)}.
\end{equation}
$(iii)$ (Medium frequency) For $N_0<N<N_1$,
\begin{equation}
\|\textup{Pb}_N\|_{L^{p,1}\to L^{p,\infty}}\lesssim_{N_0,N_1} \|m\|_{\mathcal{H}(s)}.
\end{equation}
\end{lemma}

\begin{proof}[Proof of Theorem \ref{main theorem}, assuming Lemma \ref{Key}]
Let $p>1$ be sufficiently close to $1$ as in Lemma \ref{Key}. Summing the estimates in Lemma \ref{Key}, we prove that $\textup{Pb}$ is bounded from $L^{p,1}$ to $L^{p,\infty}$. Then, it follows from the classical H\"ormander-Mikhlin multiplier theorem that $m(H)P_c=m(-\Delta)+\textup{Pb}$ is bounded from $L^{p,1}$ to $L^{p,\infty}$. Moreover, by Lemma \ref{eigenfunctions} (see below), $m(H): L^{p,1}\to L^{p,\infty}$ is bounded.

Recall that by functional calculus, $m(H)$ is bounded on $L^2$. Thus, by the real interpolation lemma (Corollary \ref{Interpolation}), $m(H)$ is bounded on $L^p$ for all $1<p\leq2$. Finally, applying the spectral multiplier theorem to the symbol $\bar{m}$ and the standard duality argument with $m(H)=\bar{m}(H)^*$, we conclude that $m(H)$ is bounded on $L^p$ for $2<p<\infty$.
\end{proof}

\section{Preliminaries}

\subsection{Resolvent estimates}
Following Beceanu-Goldberg \cite{BG}, we collect kernel estimates for $VR_0^+(\lambda)$,  $V(R_0^+(\lambda)-R_0^+(\lambda_0))$, $(VR_0^+(\lambda))^4$ and $(I+VR_0^+(\lambda))^{-1}$, all of which will play as \textit{building blocks} to analyze the kernel of $\textup{Pb}_N$.

\begin{lemma}[Resolvent estimates]\label{VR estimate} Suppose that $V\in\mathcal{K}_0$.\\
$(i)$ For $\lambda\geq0$,
\begin{equation}
\|VR_0^+(\lambda)f\|_{L^1}\leq \frac{\|V\|_{\mathcal{K}}}{4\pi}\|f\|_{L^1}.
\end{equation}
$(ii)$ Define the difference operator by
\begin{equation}
B_{\lambda,\lambda_0}:=V(R_0^+(\lambda)-R_0^+(\lambda_0)).
\end{equation}
For $\epsilon>0$, there exist $\delta>0$ and an integral operator $B: L^1\to L^1$ such that for $|\lambda-\lambda_0|\leq\delta$ and $\lambda,\lambda_0\geq0$,
\begin{equation}
|B_{\lambda,\lambda_0}(x,y)|\leq B(x,y)\textup{, and }\|B(x,y)\|_{L_y^\infty L_x^1}\leq\epsilon.
\end{equation}
$(iii)$ For $\epsilon>0$, there exist $N_1\gg1$ and an integral operator $D=D_\epsilon: L^1\to L^1$ such that for $\lambda\geq N_1$,
\begin{equation}
|(VR_0^+(\lambda))^4(x,y)|\leq D(x,y)\textup{, and }\|D(x,y)\|_{L_y^\infty L_x^1}\leq\epsilon.
\end{equation}
\end{lemma}

\begin{proof}$(i)$ By the free resolvent formula $R_0^+(\lambda)(x,y)=\frac{e^{i\sqrt{\lambda}|x-y|}}{4\pi|x-y|}$, the Minkowski inequality and the definition of the global Kato norm \eqref{global Kato norm}, we have
\begin{equation}
\|VR_0^+(\lambda)f\|_{L^1}\leq\int_{\mathbb{R}^3} \Big\| \frac{|V(x)|}{4\pi|x-y|}\Big\|_{L_x^1} |f(y)|dy\leq\frac{\|V\|_{\mathcal{K}}}{4\pi}\|f\|_{L^1}.
\end{equation}
$(ii)$ For $\epsilon>0$, decompose $V=V_1+V_2$ such that $V_1$ is bounded and compactly supported and $\|V_2\|_\mathcal{K}\leq\epsilon$. We choose $\delta>0$ such that $|\sqrt{\lambda}-\sqrt{\lambda_0}|\leq\epsilon\|V_1\|_{L^1}^{-1}$ for all $\lambda,\lambda_0\geq0$ with $|\lambda-\lambda_0|\leq\delta$. By the mean-value theorem,
\begin{equation}
\begin{aligned}
|B_{\lambda,\lambda_0}(x,y)|&\leq\Big|\frac{V_1(x)(e^{i\sqrt{\lambda}|x-y|}-e^{i\sqrt{\lambda_0}|x-y|})}{4\pi|x-y|}\Big|+\Big|\frac{V_2(x)(e^{i\sqrt{\lambda}|x-y|}-e^{i\sqrt{\lambda_0}|x-y|})}{4\pi|x-y|}\Big|\\
&\leq\frac{|V_1(x)||\sqrt{\lambda}-\sqrt{\lambda_0}|}{4\pi}+\frac{|V_2(x)|}{2\pi|x-y|}\\
&\leq\frac{\epsilon|V_1(x)|}{4\pi\|V_1\|_{L^1}}+\frac{|V_2(x)|}{2\pi|x-y|}=:B_\epsilon(x,y).
\end{aligned}
\end{equation}
Then, we have
\begin{equation}
\|B_\epsilon(x,y)\|_{L_y^\infty L_x^1}\leq\frac{\epsilon}{4\pi}+\frac{\|V_2\|_{\mathcal{K}}}{2\pi}\leq\epsilon.
\end{equation}
$(iii)$ Similarly, for $\epsilon>0$, decompose $V=V_1+V_2$ such that $V_1$ is bounded and compactly supported and $\|V_2\|_\mathcal{K}\leq\epsilon\|V\|_{\mathcal{K}}^{-3}$. We then write 
\begin{equation}
|(VR_0^+(\lambda))^4(x,y)|\leq|(V_1 R_0^+(\lambda))^4(x,y)|+|(VR_0^+(\lambda))^4(x,y)-(V_1 R_0^+(\lambda))^4(x,y)|.
\end{equation}
For the first term, by the fractional integration inequalities, the H\"older inequalities in the Lorentz spaces (Lemma A.2) and the free resolvent estimate $\|R_0^+(\lambda)\|_{L^{4/3}\to L^4}\lesssim\la\lambda\ra^{-1/4}$ \cite[Lemma 2.1]{GS2}, we get
\begin{equation}
\begin{aligned}
&\|R_0^+(\lambda)(V_1 R_0^+(\lambda))^3f\|_{L^\infty}\\
&\lesssim \|(V_1 R_0^+(\lambda))^3f\|_{L^{3/2,1}}\leq\|V_1\|_{L^{3,1}}\|R_0^+(\lambda)(V_1 R_0^+(\lambda))^2f\|_{L^{3,\infty}}\\
&\lesssim\|(V_1 R_0^+(\lambda))^2f\|_{L^1}\leq\|V_1\|_{L^{4/3}} \|R_0^+(\lambda) V_1 R_0^+(\lambda) f\|_{L^4}\\
&\lesssim\la\lambda\ra^{-1/4}\|V_1 R_0^+(\lambda)f\|_{L^{4/3}}\lesssim\la\lambda\ra^{-\frac{1}{4}}\|V_1\|_{L^\infty}\|R_0^+(\lambda)f\|_{L_{x\in\supp V_1}^{4/3}}\\&\lesssim\la\lambda\ra^{-\frac{1}{4}}\int_{\mathbb{R}^3}\Big\|\frac{1}{|x-y|}\Big\|_{L_{x\in\supp V_1}^{4/3}}|f(y)| dy\lesssim\la\lambda\ra^{-\frac{1}{4}}\|f\|_{L^1}.
\end{aligned}
\end{equation}
Taking $f\to\delta(\cdot-y)$, we obtain that $|R_0^+(\lambda)(V_1 R_0^+(\lambda))^3(x,y)|\to 0$ as $\lambda\to+\infty$. Thus, there exists $N_1=N_1(\epsilon, V_1)\gg1$ such that if $\lambda\geq N_1$, then 
\begin{equation}
|(V_1 R_0^+(\lambda))^4)(x,y)|\leq\frac{\epsilon|V_1(x)|}{2\|V_1\|_{L^1}}=:D_1(x,y).
\end{equation}
Then, it is obvious that $\|D_1(x,y)\|_{L_{y}^\infty L_x^1}\leq\frac{\epsilon}{2}$. For the second term, we split 
\begin{equation}
\begin{aligned}
&(VR_0^+(\lambda))^4(x,y)-(V_1 R_0^+(\lambda))^4(x,y)\\
&=(V_2R_0^+(\lambda)(VR_0^+(\lambda))^3)(x,y)+(V_1 R_0^+(\lambda)V_2R_0^+(\lambda)(VR_0^+(\lambda))^2)(x,y)\\
&+((V_1 R_0^+(\lambda))^2V_2R_0^+(\lambda)VR_0^+(\lambda))(x,y)+((V_1 R_0^+(\lambda))^3V_2R_0^+(\lambda))(x,y).
\end{aligned}
\end{equation}
Since the kernel of $R_0^+(\lambda)$ is bounded by the kernel of $(-\Delta)^{-1}$, we have
\begin{equation}
\begin{aligned}
&|(VR_0^+(\lambda))^4(x,y)-(V_1 R_0^+(\lambda))^4(x,y)|\\
&\leq(|V_2|(-\Delta)^{-1}(|V|(-\Delta)^{-1})^3)(x,y)\\
&+(|V_1| (-\Delta)^{-1}|V_2|(-\Delta)^{-1}(|V|(-\Delta)^{-1})^2)(x,y)\\
&+((|V_1| (-\Delta)^{-1})^2|V_2|(-\Delta)^{-1}|V|(-\Delta)^{-1})(x,y)\\
&+((|V_1| (-\Delta)^{-1})^3|V_2|(-\Delta)^{-1})(x,y)\\
&=:D_2(x,y).
\end{aligned}
\end{equation}
Then, 
\begin{equation}
\begin{aligned}
\|D_2(x,y)\|_{L_y^\infty L_x^1}&=\|D_2\|_{L^1 \to L^1}\\
&\leq\||V_2|(-\Delta)^{-1}\|_{L^1\to L^1}\||V|(-\Delta)^{-1}\|_{L^1\to L^1}^3\\
&+\||V_1|(-\Delta)^{-1}\|_{L^1\to L^1}\||V_2|(-\Delta)^{-1}\|_{L^1\to L^1}\||V|(-\Delta)^{-1}\|_{L^1\to L^1}^2\\
&+\||V_1|(-\Delta)^{-1}\|_{L^1\to L^1}^2\||V_2|(-\Delta)^{-1}\|_{L^1\to L^1}\||V|(-\Delta)^{-1}\|_{L^1\to L^1}\\
&+\||V_1|(-\Delta)^{-1}\|_{L^1\to L^1}^3\||V_2|(-\Delta)^{-1}\|_{L^1\to L^1}\\
&\leq 4 \Big(\frac{\|V\|_{\mathcal{K}}+\|V_2\|_{\mathcal{K}}}{4\pi}\Big)^3 \frac{\|V_2\|_{\mathcal{K}}}{4\pi}\leq\frac{\epsilon}{2},
\end{aligned}
\end{equation}
where $D_2$ is an integral operator with kernel $D_2(x,y)$. Therefore, we conclude that 
\begin{equation}
|(VR_0^+(\lambda))^4(x,y)|\leq D(x,y):=D_1(x,y)+D_2(x,y)
\end{equation}
and $\|D(x,y)\|_{L_y^\infty L_x^1}\leq\epsilon$.
\end{proof}

By algebra, the resolvent $R_V^+(\lambda)$ can be written as
\begin{equation}
R_V^+(\lambda)``="R_0^+(\lambda)(I+VR_0^+(\lambda))^{-1}.
\end{equation}
Let $\mathcal{L}(L^1)$ be the space of bounded operators on $L^1$. The following lemmas say that $(I+VR_0^+(\lambda))$ is invertible in $\mathcal{L}(L^1)$ for $\lambda\geq0$, its inverse $(I+VR_0^+(\lambda))^{-1}$ is uniformly bounded in $\mathcal{L}(L^1)$, and is the sum of the identity map and an integral operator.

\begin{lemma}[Invertibility of $(I+VR_0^+(\lambda))$] 
If $V\in\mathcal{K}_0$ and $H$ has no eigenvalue or resonance on $[0,+\infty)$, then $(I+VR_0^+(\lambda))$ is invertible in $\mathcal{L}(L^1)$ for $\lambda\geq0$. 
\end{lemma}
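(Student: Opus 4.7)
The plan is to establish invertibility by combining two independent facts about $I+VR_0^+(\lambda)$ on $L^1$: (a) it is a Fredholm operator of index 0, and (b) it has trivial kernel. Together these give bijectivity, hence a bounded inverse by the open mapping theorem.

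For the Fredholmness, I would exploit the density of bounded compactly supported functions in $\mathcal{K}_0$ to split $V=V_1+V_2$ with $V_1\in L^\infty$ compactly supported and $\|V_2\|_\mathcal{K}<2\pi$, say. By Lemma 3.1(i), $\|V_2R_0^+(\lambda)\|_{L^1\to L^1}\leq \tfrac{1}{2}$, so $I+V_2R_0^+(\lambda)$ is invertible on $L^1$ by Neumann series. Meanwhile $V_1R_0^+(\lambda)$ should be compact on $L^1$: differentiating the free resolvent kernel produces $|x-y|^{-2}$, which maps $L^1$ into $L_{\mathrm{loc}}^{3/2,\infty}$ by Hardy--Littlewood--Sobolev, so $V_1R_0^+(\lambda)$ sends $L^1$ boundedly into $W^{1,p}$-functions supported in $\supp V_1$ for some $1<p<3/2$, and Rellich--Kondrachov gives a compact embedding back into $L^p(\supp V_1)\hookrightarrow L^1$. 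The factorization
\begin{equation*}
I+VR_0^+(\lambda)=\bigl(I+V_2R_0^+(\lambda)\bigr)\bigl[I+(I+V_2R_0^+(\lambda))^{-1}V_1R_0^+(\lambda)\bigr]
\end{equation*}
then exhibits $I+VR_0^+(\lambda)$ as an invertible operator composed with $I+\text{compact}$, which is Fredholm of index 0 by the standard Fredholm alternative on Banach spaces.

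For the triviality of the kernel, I would run the familiar resonance argument. Assume $f\in L^1$ satisfies $(I+VR_0^+(\lambda))f=0$, and set $\psi:=R_0^+(\lambda)f$. Since the free resolvent kernel is bounded by $\tfrac{1}{4\pi|x-y|}$, we get $|\psi(x)|\lesssim \|f\|_{L^1}\la x\ra^{-1}$ for $|x|$ large, so $\psi\in L^{2,-s}$ for every $s>\tfrac{1}{2}$. The equation $(I+VR_0^+(\lambda))f=0$ gives $V\psi=-f$, so applying $R_0^+(\lambda)$ yields $\psi+R_0^+(\lambda)V\psi=0$. If $\psi\in L^2$, it is an $L^2$-eigenfunction of $H$ at $\lambda\geq 0$, contradicting the hypothesis; otherwise $\psi\in L^{2,-s}\setminus L^2$ is a resonance at $\lambda$, again contradicting the hypothesis. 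Hence $\psi=0$, and therefore $f=-V\psi=0$.

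The main obstacle is the compactness of $V_1R_0^+(\lambda)$ on $L^1$, since compactness on $L^1$ does not follow from standard kernel estimates the way it does on reflexive $L^p$ spaces; the Sobolev/Rellich detour above is what I would use, but care must be taken because $V_1$ is only $L^\infty$ with compact support rather than smooth (so one may need to further approximate $V_1$ by a Lipschitz truncation, or apply the Kolmogorov--Riesz--Fr\'echet compactness criterion directly to the family $\{V_1R_0^+(\lambda)f:\|f\|_{L^1}\leq 1\}$, verifying uniform equi-integrability and uniform control of translations using the decay of the kernel derivative). Everything else in the argument is essentially bookkeeping once this compactness is in hand.
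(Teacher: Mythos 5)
Your overall route is the same as the paper's: reduce invertibility to injectivity via the Fredholm alternative, and rule out a nontrivial kernel by producing an eigenvalue or resonance at $\lambda\geq0$. The paper compresses the first half into the single sentence ``if it is not invertible, there exists $\varphi\in L^1$ with $(I+VR_0^+(\lambda))\varphi=0$,'' i.e.\ it takes the index-zero Fredholm property of $I+VR_0^+(\lambda)$ on $L^1$ for granted (it is available for Kato-class $V$, cf.\ \cite{BG}); you supply it explicitly through the splitting $V=V_1+V_2$, the Neumann series for $I+V_2R_0^+(\lambda)$, and compactness of $V_1R_0^+(\lambda)$ on $L^1$. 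That part is sound: the factorization is algebraically correct, and the compactness can indeed be obtained by the Kolmogorov--Riesz--Fr\'echet criterion (equi-integrability follows from the uniform $L^p(\supp V_1)$ bound for some $1<p<3$, and translation continuity from $\|V_1(\cdot+h)-V_1\|_{\mathcal{K}}\to0$ together with smoothness of the resolvent kernel off the diagonal), or by approximating $V_1$ in Kato norm by smooth compactly supported functions and using that compact operators are closed under operator-norm limits --- exactly the caveats you flag; the Sobolev--Rellich detour as literally written does not apply since $\nabla V_1$ need not exist.

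One step needs repair: the pointwise bound $|\psi(x)|\lesssim\|f\|_{L^1}\langle x\rangle^{-1}$ for large $|x|$ is false for general $f\in L^1$ (take $f$ a sum of increasingly concentrated bumps placed at $|y|\to\infty$; then $\psi=R_0^+(\lambda)f$ has arbitrarily large peaks far out). What is true, and all you need, is the weighted estimate the paper uses: by Minkowski's inequality,
\begin{equation*}
\|\langle x\rangle^{-s}\psi\|_{L^2}\leq\int_{\mathbb{R}^3}\Big\|\frac{1}{\langle x\rangle^{s}|x-y|}\Big\|_{L_x^2}|f(y)|\,dy\lesssim\|f\|_{L^1}
\end{equation*}
for any $s>\tfrac{1}{2}$, since the inner norm is bounded uniformly in $y$. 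With that substitution your kernel-triviality argument coincides with the paper's: $\psi$ solves $\psi+R_0^+(\lambda)V\psi=0$, lies in $L^{2,-s}$, and is nonzero unless $f=-V\psi=0$, so it would furnish an eigenvalue or resonance at $\lambda\geq0$, contradicting the hypothesis.
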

\begin{proof}
If it is not invertible, there exists $\varphi\in L^1$, $\varphi\neq0$, such that $(I+VR_0^+(\lambda))\varphi=0$. Then, $\psi:=R_0^+(\lambda)\varphi$ solves the eigenvalue equation $(-\Delta+V)\psi=(\lambda+i0)\psi\Longleftrightarrow\psi+R_0^+(\lambda)V\psi=0$. Moreover, by the resolvent formula $R_0^+(\lambda)(x,y)=\frac{e^{i\sqrt{\lambda}|x-y|}}{4\pi|x-y|}$, if $s>\frac{1}{2}$, then
\begin{equation}
\begin{aligned}
\|\la x\ra^{-s}\psi\|_{L^2}&=\|\la x\ra^{-s}R_0^+(\lambda)\varphi\|_{L^2}\leq \int_{\mathbb{R}^3}\Big\|\frac{1}{\la x\ra^s4\pi|x-y|}\Big\|_{L_x^2}|\varphi(y)|dy\lesssim\|\varphi\|_{L^1}.
\end{aligned}
\end{equation}
Hence, $\lambda$ is an eigenvalue or a resonance (contradiction!).
\end{proof}

\begin{lemma}[Uniform bound for $(I+VR_0^+(\lambda))^{-1}$] If $V\in\mathcal{K}_0$ and $H$ has no eigenvalue or resonance on $[0,+\infty)$, then $S_\lambda:=(I+VR_0^+(\lambda))^{-1}: [0,+\infty)\to\mathcal{L}(L^1)$ is uniformly bounded.
\end{lemma}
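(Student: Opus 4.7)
The plan is to split the spectral parameter range as $[0,+\infty)=[0,N_1]\cup[N_1,+\infty)$ and bound $\|S_\lambda\|_{L^1\to L^1}$ uniformly on each piece by different means. The high-frequency tail will be handled by an algebraic identity that converts the Neumann series into fourth iterates, while the bounded piece will be handled by compactness combined with a local perturbation.

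For the tail, I would apply Lemma 3.1$(iii)$ with $\epsilon=\tfrac{1}{2}$ to obtain $N_1\gg 1$ such that $\|(VR_0^+(\lambda))^4\|_{L^1\to L^1}\leq\tfrac{1}{2}$ for all $\lambda\geq N_1$. Setting $A:=VR_0^+(\lambda)$, the identity $(I+A)(I-A+A^2-A^3)=I-A^4$ gives
$$S_\lambda=(I-A+A^2-A^3)(I-A^4)^{-1},$$
and $(I-A^4)^{-1}$ is controlled on $L^1$ by a Neumann series with norm at most $2$, uniformly in $\lambda\geq N_1$. Lemma 3.1$(i)$ bounds $\|A\|_{L^1\to L^1}\leq\|V\|_{\mathcal{K}}/(4\pi)$ for every $\lambda\geq 0$, so the left factor has operator norm depending only on $\|V\|_{\mathcal{K}}$. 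This yields a uniform bound for $\|S_\lambda\|_{L^1\to L^1}$ on $[N_1,+\infty)$.

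For the bounded piece, I would fix $\lambda_0\in[0,N_1]$; by Lemma 3.2, $S_{\lambda_0}\in\mathcal{L}(L^1)$. Applying Lemma 3.1$(ii)$ with $\epsilon=1/(2\|S_{\lambda_0}\|_{L^1\to L^1})$ produces $\delta=\delta(\lambda_0)>0$ with $\|B_{\lambda,\lambda_0}\|_{L^1\to L^1}\leq\epsilon$ whenever $|\lambda-\lambda_0|\leq\delta$. Writing the resolvent as a perturbation of its value at $\lambda_0$ gives the factorization
$$I+VR_0^+(\lambda)=(I+VR_0^+(\lambda_0))(I+S_{\lambda_0}B_{\lambda,\lambda_0}),$$
so inverting the second factor via a Neumann series produces $\|S_\lambda\|_{L^1\to L^1}\leq 2\|S_{\lambda_0}\|_{L^1\to L^1}$ on this neighborhood. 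Covering the compact set $[0,N_1]$ by finitely many such neighborhoods yields a uniform bound, completing the proof.

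The only conceptually delicate step is the high-frequency one: unlike near $\lambda=0$, one cannot make a single factor $VR_0^+(\lambda)$ small as $\lambda\to\infty$ (Lemma 3.1$(i)$ gives no decay in $\lambda$), so the smallness has to be extracted from four iterates, where the dispersive smoothing of $R_0^+(\lambda)$ from Lemma 3.1$(iii)$ overcomes the lack of a small parameter. This is precisely why Lemma 3.1$(iii)$ is stated for the fourth power and why one must rearrange the Neumann expansion through the algebraic identity above rather than summing it naively.
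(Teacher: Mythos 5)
Your proposal is correct and follows essentially the same route as the paper: the same fourth-iterate rearrangement $S_\lambda=(I-A+A^2-A^3)(I-A^4)^{-1}$ with Lemma 3.1$(iii)$ for $\lambda\geq N_1$, and the same factorization $I+VR_0^+(\lambda)=(I+VR_0^+(\lambda_0))(I+S_{\lambda_0}B_{\lambda,\lambda_0})$ with Lemma 3.1$(ii)$ and Lemma 3.2 on the remaining range. The only cosmetic difference is that the paper packages the compact-interval step as continuity of $\lambda\mapsto S_\lambda$ (boundedness on $[0,N_1]$ then being implicit), whereas you use the local bound $\|S_\lambda\|\leq 2\|S_{\lambda_0}\|$ plus a finite subcover, which is an equivalent argument.
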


\begin{proof} 
Iterating the resolvent identity, we get the formal identity
\begin{equation}
(I+VR_0^+(\lambda))^{-1}``="(I-VR_0^+(\lambda)+(VR_0^+(\lambda))^2-(VR_0^+(\lambda))^3)\sum_{n=0}^\infty (VR_0^+(\lambda))^{4n}.
\end{equation}
Indeed, by Lemma \ref{VR estimate} $(iii)$, $\|(VR_0^+(\lambda))^4\|_{L^1\to L^1}<\frac{1}{2}$ for all sufficiently large $\lambda$. Hence, the formal identity $(3.16)$ makes sense, and $(I+VR_0^+(\lambda))^{-1}$ is uniformly bounded for all sufficiently large $\lambda$. Thus, it suffices to show that $(I+VR_0^+(\lambda))^{-1}$ is continuous. To see this, we fix $\lambda_0\geq0$ and write
\begin{equation}
\begin{aligned}
&(I+VR_0^+(\lambda))^{-1}-(I+VR_0^+(\lambda_0))^{-1}=(I+VR_0^+(\lambda_0)+B_{\lambda,\lambda_0})^{-1}-S_{\lambda_0}\\
&=[(I+VR_0^+(\lambda_0)(I+S_{\lambda_0}B_{\lambda,\lambda_0})]^{-1}-S_{\lambda_0}=(I+S_{\lambda_0}B_{\lambda,\lambda_0})^{-1}S_{\lambda_0}-S_{\lambda_0}\\
``&="\sum_{n=0}^\infty (-S_{\lambda_0}B_{\lambda,\lambda_0})^nS_{\lambda_0}-S_{\lambda_0}``="\sum_{n=1}^\infty (-S_{\lambda_0}B_{\lambda,\lambda_0})^nS_{\lambda_0}.
\end{aligned}
\end{equation}
Then, by Lemma \ref{VR estimate} $(ii)$, we have
\begin{equation}
\begin{aligned}
&\|(I+VR_0^+(\lambda))^{-1}-(I+VR_0^+(\lambda_0))^{-1}\|_{L^1\to L^1}\leq\sum_{n=1}^\infty \|S_{\lambda_0}\|_{L^1\to L^1}^{n+1}\|B_{\lambda,\lambda_0}\|_{L^1\to L^1}^n\\
&=\frac{\|S_{\lambda_0}\|_{L^1\to L^1}^2\|B_{\lambda,\lambda_0}\|_{L^1\to L^1}}{1-\|S_{\lambda_0}\|_{L^1\to L^1}\|B_{\lambda,\lambda_0}\|_{L^1\to L^1}}\to 0\textup{ as }\lambda\to\lambda_0.
\end{aligned}
\end{equation}
Therefore, the formal identity $(3.17)$ makes sense, and $(I+VR_0^+(\lambda))^{-1}$ is continuous.
\end{proof}

\begin{lemma} If $V\in\mathcal{K}_0$ and $H$ has no eigenvalue or resonance on $[0,+\infty)$, then
$\tilde{S}_{\lambda}:=(S_\lambda -I)=(I+VR_0^+(\lambda))^{-1}-I:[0,+\infty)\to\mathcal{L}(L^1)$ is not only uniformly bounded but also an integral operator with kernel $\tilde{S}_\lambda(x,y)$:
\begin{equation}\label{tildeS}
\tilde{S}:=\sup_{\lambda\geq0}\|\tilde{S}_{\lambda}\|_{L^1\to L^1}=\sup_{\lambda\geq0}\|\tilde{S}_{\lambda}(x,y)\|_{L_y^\infty L_x^1}<\infty.
\end{equation}
\end{lemma}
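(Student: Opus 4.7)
The plan is to exploit the algebraic identity
\[\tilde{S}_\lambda = S_\lambda - I = -S_\lambda V R_0^+(\lambda),\]
which follows immediately from multiplying $S_\lambda(I+VR_0^+(\lambda))=I$ on the right by $-I$ and rearranging. This expresses $\tilde{S}_\lambda$ as the composition of the bounded operator $S_\lambda$ (controlled uniformly in $\lambda$ by Lemma 3.3) with the explicit integral operator $VR_0^+(\lambda)$, whose kernel we already have in closed form. The strategy is then to transfer the kernel of $VR_0^+(\lambda)$ through $S_\lambda$ column-by-column.

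Concretely, let $K_\lambda(x,y):=V(x)\frac{e^{i\sqrt{\lambda}|x-y|}}{4\pi|x-y|}$ denote the integral kernel of $VR_0^+(\lambda)$. The defining property of the global Kato norm yields $\sup_{y,\lambda}\|K_\lambda(\cdot,y)\|_{L^1_x}\leq\|V\|_{\mathcal{K}}/(4\pi)$, so $S_\lambda K_\lambda(\cdot,y)\in L^1_x$ is well defined for each $y$. I then set
\[\tilde{S}_\lambda(x,y):=-\bigl(S_\lambda K_\lambda(\cdot,y)\bigr)(x),\]
and the uniform bound from Lemma 3.3 gives
\[\sup_{\lambda\geq 0}\sup_y\|\tilde{S}_\lambda(\cdot,y)\|_{L^1_x}\leq\Bigl(\sup_{\lambda\geq 0}\|S_\lambda\|_{L^1\to L^1}\Bigr)\cdot\frac{\|V\|_{\mathcal{K}}}{4\pi}<\infty,\]
which is exactly the $L^\infty_y L^1_x$ kernel estimate in \eqref{tildeS}.

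The remaining task is to verify that the function $\tilde{S}_\lambda(x,y)$ defined above is genuinely the integral kernel of the abstract operator $\tilde{S}_\lambda$. For $f\in L^1$, the Kato estimate gives $\int\|K_\lambda(\cdot,y)\|_{L^1_x}|f(y)|\,dy\leq(\|V\|_{\mathcal{K}}/(4\pi))\|f\|_{L^1}$, which makes $y\mapsto f(y)K_\lambda(\cdot,y)$ Bochner integrable into $L^1_x$ with Bochner integral $VR_0^+(\lambda)f$. Pulling the bounded linear map $S_\lambda$ inside the Bochner integral and reading off the pointwise $x$-value by Fubini then yields
\[(\tilde{S}_\lambda f)(x)=-\bigl(S_\lambda VR_0^+(\lambda)f\bigr)(x)=\int\tilde{S}_\lambda(x,y)\,f(y)\,dy\quad\text{for a.e. }x,\]
identifying the kernel.

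The only real obstacle I anticipate is the measurability bookkeeping, namely checking that $\tilde{S}_\lambda(x,y)$ admits a jointly measurable representative so that the Fubini step above is legitimate. This follows because $y\mapsto K_\lambda(\cdot,y)$ is $L^1_x$-continuous (by dominated convergence after splitting $V=V_1+V_2$ into a bounded compactly supported part and a Kato-small part, as in the proof of Lemma 3.1(ii)), composition with the bounded $S_\lambda$ preserves this continuity, and strong continuity into a separable Banach space supplies joint measurability. The operator-theoretic content of the lemma is therefore just the one-line identity combined with the uniform bound of Lemma 3.3.
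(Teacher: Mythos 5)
Your proposal is correct and follows essentially the same route as the paper: the identity $\tilde{S}_\lambda=-S_\lambda VR_0^+(\lambda)$, the definition of the kernel as $-[S_\lambda K_\lambda(\cdot,y)](x)$ applied column-by-column, and the uniform bound from Lemma 3.3 combined with the Kato-norm estimate are exactly the paper's argument. The only difference is the final identification step, where you pull $S_\lambda$ through a Bochner integral while the paper pairs against a test function and uses $S_\lambda^*$ with Fubini; these are interchangeable technical devices, not a different approach.
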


\begin{proof}
By algebra, we have
\begin{equation}
\tilde{S}_{\lambda}=(I+VR_0^+(\lambda))^{-1}-I=-(I+VR_0^+(\lambda))^{-1}VR_0^+(\lambda)=-S_\lambda VR_0^+(\lambda).
\end{equation}
Since $\tilde{S}_{\lambda}: L^1\to L^1$ is bounded, sending $f_\epsilon\to\delta(\cdot-y_0)$ as $\epsilon\to0$, we get
\begin{equation}
(\tilde{S}_{\lambda}f_\epsilon)(x)=(-S_\lambda VR_0^+(\lambda)f_\epsilon)(x)\to -S_\lambda \Big(\frac{V(\cdot)e^{i\sqrt{\lambda}|\cdot-y_0|}}{4\pi|\cdot-y_0|}\Big)(x)=:\tilde{S}_\lambda(x,y_0).
\end{equation}
Consider $F_V(x;y,\lambda):=V(x)\frac{e^{i\sqrt{\lambda}|x-y|}}{4\pi|x-y|}$ as a function of $x$ with parameters $y\in\mathbb{R}^3$ and $\lambda\in\mathbb{R}$. Then, $F_V(x;y,\lambda)$ is bounded in $L_x^1$ uniformly in $y$ and $\lambda$. Therefore, by Lemma 3.3, we conclude that $\tilde{S}_\lambda(x,y)=-S_\lambda \Big(\frac{V(\cdot)e^{i\sqrt{\lambda}|\cdot-y|}}{4\pi|\cdot-y|}\Big)(x)$ is also bounded in $L_x^1$ uniformly in $\lambda$ and $y$.
\end{proof}

\subsection{Spectral projections and eigenfunctions}
Let $\chi$ be the dyadic partition of unity function chosen in \eqref{Hs}, and let $\tilde{\chi}_N(\lambda)\in C_c^\infty(\mathbb{R})$ such that $\tilde{\chi}_N(\lambda)=\chi(\tfrac{\sqrt{\lambda}}{N})$ if $\lambda\geq0$; $\tilde{\chi}_N(\lambda)=0$ if $\lambda<0$. By functional calculus, we define the Littlewood-Paley projections by $P_N=\tilde{\chi}_N(H)$, $P_{\leq N_0}=\sum_{N<N_0}P_N$, $P_{N_0<\cdot<N_1}=\sum_{N_0< N<N_1}P_N$ and $P_{\geq N_1}=\sum_{N\geq N_1}P_N$.

\begin{lemma}
Suppose that $V\in\mathcal{K}_0\cap L^{3/2,\infty}$ and $H$ has no eigenvalue or resonance on $[0,+\infty)$. Let $\mathfrak{S}:=\{f\in L^1\cap L^\infty:  P_cf= P_{N_0<\cdot<N_1} f \textup{ for some }N_0, N_1>0\}$. For $1<r<\infty$, $\mathfrak{S}$ is dense in $L^r$.
\end{lemma}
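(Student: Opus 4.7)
Since $L^1\cap L^\infty$ is dense in $L^r$ for $1<r<\infty$, it suffices to approximate each $f\in L^1\cap L^\infty$ in $L^r$-norm by an element of $\mathfrak{S}$. I would use the explicit sequence
\[
f_n \;:=\; P_{pp}f \;+\; P_{2^{-n}<\cdot<2^n}f, \qquad n\in\mathbb{N},
\]
where $P_{pp}=I-P_c$ is the finite-rank projection onto the span of the finitely many negative-eigenvalue eigenfunctions $\{\phi_j\}$ of $H$ (the finiteness is part of the standing hypotheses).

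The first task is to verify $f_n\in L^1\cap L^\infty$. By standard Agmon-type exponential-decay estimates for Schr\"odinger eigenfunctions with Kato potentials, each $\phi_j$ lies in $L^1\cap L^\infty$, so $P_{pp}f$ does. For the middle-frequency piece, choose $a\gg1$ so that $a+H$ admits the Gaussian heat kernel estimate (possible by the assumption $V_-\in\mathcal{K}_0$); then $P_{2^{-n}<\cdot<2^n}=\tilde\Phi_n(a+H)$ for the symbol
\[
\tilde\Phi_n(\mu):=\sum_{2^{-n}<N<2^n}\chi\bigl(\sqrt{\mu-a}/N\bigr)\in C_c^\infty\bigl((a,+\infty)\bigr),
\]
and functional calculus against the Gaussian heat kernel endows $\tilde\Phi_n(a+H)$ with an integral kernel of Gaussian decay in both variables, which in particular maps $L^1\cap L^\infty$ into itself.

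Membership of $f_n$ in $\mathfrak{S}$ is a short support calculation. Pick dyadic witnesses $N_0'\leq 2^{-n-3}$ and $N_1'\geq 2^{n+3}$. Since $\{\chi(\cdot/N)\}_{N\in 2^{\mathbb{Z}}}$ forms a smooth partition of unity on $(0,+\infty)$, the partial sum $\sum_{N_0'<N<N_1'}\chi(\sqrt{\lambda}/N)$ equals $1$ for every $\sqrt{\lambda}$ in the support of $\Phi_n(\lambda):=\sum_{2^{-n}<N<2^n}\chi(\sqrt{\lambda}/N)$, which yields the operator identity $P_{N_0'<\cdot<N_1'}P_{2^{-n}<\cdot<2^n}=P_{2^{-n}<\cdot<2^n}$. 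Moreover every Littlewood--Paley symbol here vanishes on $(-\infty,0]$, so each dyadic cutoff annihilates $P_{pp}$. Combining these two facts gives $P_c f_n=P_{2^{-n}<\cdot<2^n}f=P_{N_0'<\cdot<N_1'}f_n$, which witnesses $f_n\in\mathfrak{S}$.

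The convergence $\|f-f_n\|_{L^r}\to 0$ is the main obstacle. Writing $f-f_n=(I-P_{2^{-n}<\cdot<2^n})P_c f=\tilde\xi_n(a+H)f$, the symbols $\tilde\xi_n$ tend pointwise to $0$ on the spectrum of $a+H$. Because $H$ has only finitely many negative eigenvalues, the spectrum of $a+H$ consists of the shifted eigenvalues $\{a+E_j\}$ (all strictly below $a$) and the half-line $[a,+\infty)$, separated by a strict gap, so $\tilde\xi_n$ can be extended to smooth bounded symbols with uniformly bounded H\"ormander $\mathcal{H}(s)$-norm in $n$. The spectral multiplier theorem of \cite{DOS} applied to $a+H$ then yields uniform $L^q$-operator bounds on $\tilde\xi_n(a+H)$ for every $1<q<\infty$. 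Together with the $L^2$-strong convergence $\|\tilde\xi_n(a+H)f\|_{L^2}\to 0$ (spectral theorem plus dominated convergence), Riesz--Thorin interpolation with $q$ chosen on the opposite side of $r$ from $2$ yields $L^r$-convergence for $f\in C_c^\infty$; Banach--Steinhaus using the uniform $L^r$-bound then upgrades this to all $f\in L^1\cap L^\infty$. The technical crux is the uniform H\"ormander bound for $\tilde\xi_n$ as the cutoff localizes near $\mu=a$ and $\mu=\infty$; every other step is routine.
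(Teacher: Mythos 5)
Your overall strategy --- approximate $f\in L^1\cap L^\infty$ by $(I-P_c)f+P_{2^{-n}<\cdot<2^n}f$, check membership in $\mathfrak{S}$ by a support computation, and obtain $L^r$-convergence by combining $L^2$-convergence with uniform operator bounds and interpolation --- is the same as the paper's, and the membership, $\mathfrak{S}$, and $L^2$ steps are fine. The genuine gap is exactly at the point you flag as the ``technical crux'': the symbols $\tilde\xi_n$ do \emph{not} have uniformly bounded H\"ormander norms when viewed as spectral multipliers of $a+H$. The H\"ormander condition used in \cite{DOS} is scale-invariant with respect to the origin of the spectral variable $\mu$, so a cutoff transitioning over an interval of width $\sim 4^{-n}$ located at the \emph{fixed interior point} $\mu=a$ of $\sigma(a+H)$ has, in the dyadic window $\mu\sim a$, rescaled derivatives of size $(a4^{n})^{k}$, and its $W^{s,2}$-norm there grows like $(a4^{n})^{s-1/2}$; since \cite{DOS} requires $s>3/2$, this blows up as $n\to\infty$. (The localization near $\mu=\infty$ and the smooth extension across the spectral gap are harmless; it is the low-frequency cutoff, pushed toward the interior point $a$, that destroys uniformity --- this is precisely the obstruction that makes the shift-to-$a+H$ trick unusable for low-frequency questions and motivates the paper's own multiplier theorem.) Without the uniform $L^q$ bounds on $\tilde\xi_n(a+H)$, the interpolation/Banach--Steinhaus step collapses.

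The paper gets the needed uniformity from a different source: it invokes \cite[Corollary 1.6]{H1} with $\tilde\chi_N$ replaced by $\sum_{N<N_0}\tilde\chi_N$ to obtain bounds on $\|P_{<N_0}f\|_{L^1}$ and $\|P_{<N_0}f\|_{L^\infty}$ that are uniform in $N_0$ (similarly for $P_{>N_1}$), i.e.\ a resolvent/Born-series argument adapted to $H$ itself rather than Gaussian heat-kernel multiplier theory for $a+H$, and then interpolates against $\|P_{<N_0}f\|_{L^2}\to0$. To repair your argument, either quote that result, or note that the symbols $(1-\Phi_n)1_{[0,+\infty)}$ \emph{do} have uniformly bounded $\mathcal{H}(6)$-norms in the paper's calibration (which rescales around $\lambda=0$ for $H$, not around $\mu=a$) and apply Theorem \ref{MainTheorem} directly --- admissible since Lemma 3.5 is not used in its proof --- but not \cite{DOS} applied to $a+H$. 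A minor overstatement elsewhere: a compactly supported smooth symbol of $a+H$ yields a kernel with rapid polynomial decay rather than Gaussian decay, but this does not affect your fixed-$n$ membership step.
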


\begin{proof}
$L^1\cap L^\infty$ is dense in $L^r$. Fix $f\in L^1\cap L^\infty$. We claim that $\lim_{N_0\to0}\|P_{<N_0}f\|_{L^r}=0$. By the spectral theory, $\lim_{N_0\to0}\|P_{<N_0}f\|_{L^2}=0$. On the other hand, replacing $\tilde{\chi}_N$ by $\sum_{N<N_0}\tilde{\chi}_N$ in the proof of \cite[Corollary 1.6]{H1}, one can show that $\|P_{<N_0}f\|_{L^1}$ and $\|P_{<N_0}f\|_{L^\infty}$ are bounded uniformly in $N_0$. Hence the claim follows from interpolation. By the same argument, one can show that $\lim_{N_1\to\infty}\|P_{>N_1}f\|_{L^r}=0$. Thus, $\mathfrak{S}$ is dense in $L^r$.
\end{proof}

\begin{lemma}[Boundedness of eigenfunctions]\label{eigenfunctions} Suppose that $V\in\mathcal{K}_0\cap L^{3/2,\infty}$ and $H$ has no eigenvalue or resonance on $[0,+\infty)$. Let $\psi_j$ be an eigenfunction corresponding to the negative eigenvalue $\lambda_j$.\\
$(i)$ For all $1\leq p<\infty$, $\psi_j\in L^p$ and $P_{\lambda_j}$ is bounded on $L^p$, where $P_{\lambda_j}$ is the spectral projection onto the point $\{\lambda_j\}$.\\
$(ii)$ $\nabla\psi_j\in L^r$ for $1<r<3$. 
\end{lemma}

\begin{proof}
$(i)$ We prove the lemma following the argument in \cite{B}. We decompose $V=V_1+V_2$ such that $V_1$ is compactly supported and bounded, and $\|V_2\|_{\mathcal{K}}\leq1$. Then,
\begin{equation}
\begin{aligned}
&\psi_j+R_0(\lambda_j)V\psi_j=\psi_j+R_0(\lambda_j)(V_1+V_2)\psi_j=0\\
&\Rightarrow \psi_j=-(I+R_0(\lambda_j)V_2)^{-1}R_0(\lambda_j)V_1\psi_j=-\sum_{n=0}^\infty (-R_0(\lambda_j)V_2)^n R_0(\lambda_j)V_1\psi_j.
\end{aligned}
\end{equation}
Observe that, since $V_1$ is compactly supported, and $\lambda_j<0$, $R_0(\lambda_j)V_1\psi_j$ is exponentially decaying. To see this, we choose sufficiently small $\epsilon>0$ such that $\epsilon<\sqrt{-\lambda_j}$ for any negative eigenvalue $\lambda_j$. Indeed, there exists such $\epsilon$, since by the assumptions, there are at most finitely many negative eigenvalues (see \cite{BG}). Then, by the fractional integration inequality and the H\"older inequality in the Lorentz spaces (Lemma A.2), we get
\begin{equation}
\begin{aligned}
|e^{\epsilon|x|}(R_0(\lambda_j)V_1f)(x)|&\leq e^{\epsilon|x|}\int_{\mathbb{R}^3}\frac{e^{i\sqrt{\lambda_j}|x-y|}}{4\pi|x-y|}|V_1(y)||\psi_j(y)|dy\\
&\leq \int_{\mathbb{R}^3}\frac{e^{-(\sqrt{-\lambda_j}-\epsilon)|x-y|}}{4\pi|x-y|}e^{\epsilon|y|}|V_1(y)||\psi_j(y)|dy\\
&\leq \|e^{\epsilon|\cdot|}V_1\psi_j\|_{L^{3/2,1}}\lesssim\|e^{\epsilon|\cdot|}V_1\|_{L^{6,2}}\|\psi_j\|_{L^2}.
\end{aligned}
\end{equation}
Similarly, one can check that $e^{\epsilon|\cdot|}R_0(\lambda_j)V_2e^{-\epsilon|\cdot|}$ is bounded on $L^\infty$ and its operator norm is strictly less than $1$. Thus, we prove that
\begin{equation}
\|e^{\epsilon|\cdot|}\psi_j\|_{L^\infty}\leq\Big(\sum_{n=0}^\infty \|e^{\epsilon|\cdot|}R_0(\lambda_j)V_2e^{-\epsilon|\cdot|}\|_{L^\infty\to L^\infty}^n\Big)\|e^{\epsilon|\cdot|}R_0(\lambda_j)V_1\psi_j\|_{L^\infty}<\infty.
\end{equation}
Therefore, $\psi_j\in L^p$ and $P_{\lambda_j}f=\la\psi_j, f\ra_{L^2}\psi_j$ is bounded on $L^p$ for all $1\leq p\leq\infty$.\\
$(ii)$ Let $\delta_1,\delta_2>0$ be arbitrarily small numbers. Then, since $\lambda_j<0$, by the inhomogeneous Sobolev inequality, we get
\begin{equation}
\begin{aligned}
\|\nabla\psi_j\|_{L^{\frac{1}{1-\delta_1}}}&=\|\nabla R_0^+(\lambda_j)V\psi_j\|_{L^{\frac{1}{1-\delta_1}}}\lesssim\|V\psi_j\|_{L^{\frac{1}{1-\delta_1}}}\\
&\leq \|V\|_{L^{3/2,\infty}}\|\psi_j\|_{L^{\frac{3}{1-3\delta_1},1}}<\infty,\\
\|\nabla\psi_j\|_{L^{\frac{3}{1+\delta_2}}}&=\|\nabla R_0^+(\lambda_j)V\psi_j\|_{L^{\frac{3}{1+\delta_2}}}\lesssim\|V\psi_j\|_{W^{-1,\frac{3}{1+\delta_2}}}\\
&\lesssim\|V\psi_j\|_{L^{\frac{3}{2+\delta_2}}}\leq \|V\|_{L^{3/2,\infty}}\|\psi_j\|_{L^{\frac{3}{\delta_2},\frac{3}{2+\delta_2}}}<\infty.
\end{aligned}
\end{equation}
Thus, interpolation gives $(ii)$.
\end{proof}

\section{High Frequency Estimate: Proof of Lemma \ref{Key} $(i)$}

\subsection{Construction of the formal series expansion}
For a large dyadic number $N_1$ to be chosen later, we construct a formal series for $\textup{Pb}_{\geq N_1}$ as follows. First, iterating the resolvent identity
\begin{equation}
(I+VR_0^+(\lambda))^{-1}=I-(I+VR_0^+(\lambda))^{-1}VR_0^+(\lambda),
\end{equation}
we generate a formal series expansion
\begin{equation}\label{formal series1: high}
(I+VR_0^+(\lambda))^{-1}``="\sum_{n=0}^\infty(-VR_0^+(\lambda))^n.
\end{equation}
Plugging \eqref{formal series1: high} into \eqref{PbN}, we write 
\begin{equation}\label{formal series2: high}
\textup{Pb}_{\geq N_1}``="-\sum_{N\geq N_1}\sum_{n=0}^\infty \frac{1}{\pi}\int_0^\infty m(\lambda)\chi_N(\sqrt{\lambda})\Im[R_0^+(\lambda)(-VR_0^+(\lambda))^nVR_0^+(\lambda)]d\lambda.
\end{equation}
Then, writing the first and the last free resolvents explicitly by the free resolvent formula $R_0^+(\lambda)(x,y)=\frac{e^{i\sqrt{\lambda}|x-y|}}{4\pi|x-y|}$ and collecting terms having $\lambda$ by Fubini theorem, we write the kernel of $\textup{Pb}_{\geq N_1}$ as
\begin{equation}\label{formal series3: high}
\begin{aligned}
&\textup{Pb}_{\geq N_1}(x,y)\\
``&="\sum_{N\geq N_1}\sum_{n=0}^\infty \frac{(-1)^{n+1}}{\pi}\int_0^\infty m(\lambda)\chi_N(\sqrt{\lambda})\\
&\quad\quad\quad\quad \times\Im\Big[\int_{\mathbb{R}^6}\frac{e^{i\sqrt{\lambda}|x-\tilde{x}|}}{4\pi|x-\tilde{x}|}(VR_0^+(\lambda))^n(\tilde{x},\tilde{y})V(\tilde{y})\frac{e^{i\sqrt{\lambda}|\tilde{y}-y|}}{4\pi|\tilde{x}-y|}d\tilde{x}d\tilde{y}\Big]d\lambda\\
&=\int_{\mathbb{R}^6}\frac{V(\tilde{y})}{16\pi^3|x-\tilde{x}||\tilde{y}-y|}\Big\{\sum_{N\geq N_1}\sum_{n=0}^\infty (-1)^{n+1}\textup{Pb}_N^n(x,\tilde{x},\tilde{y}, y)\Big\}d\tilde{x}d\tilde{y},
\end{aligned}
\end{equation}
where
\begin{equation}\label{PbNn}
\textup{Pb}_N^n(x,\tilde{x},\tilde{y}, y)=\int_0^\infty m(\lambda)\chi_N(\sqrt{\lambda})\Im[e^{i\sqrt{\lambda}(|x-\tilde{x}|+|\tilde{y}-y|)}(VR_0^+(\lambda))^n(\tilde{x},\tilde{y})]d\lambda.
\end{equation}
We note that the series \eqref{formal series3: high} makes sense only formally at this moment, but it will be shown that the sum is absolutely convergent, and that it satisfies the bound we want to have.

\subsection{Intermediate kernel estimates}
We estimate the intermediate kernel $\textup{Pb}_N^n(x,\tilde{x},\tilde{y}, y)$ in two ways. First, we show that the sum of $\textup{Pb}_N^n(x,\tilde{x},\tilde{y}, y)$ in $N\geq N_1$ is absolutely convergent, and moreover each $\textup{Pb}_N^n(x,\tilde{x},\tilde{y}, y)$ decays away from $x=\tilde{x}$ and $\tilde{y}=y$.

\begin{lemma}[Summability in $N$]\label{summability in N: high} 
For $s_1,s_2\geq0$, we have
\begin{equation}\label{eq1: summability in N: high}
|\textup{Pb}_N^n (x, \tilde{x}, \tilde{y}, y)|\lesssim \frac{N^2\|m\|_{\mathcal{H}(s_1+s_2)}}{\la N(x-\tilde{x})\ra^{s_1}\la N(\tilde{y}-y)\ra^{s_2}}k_1^n(\tilde{x},\tilde{y})
\end{equation}
and
\begin{equation}\label{eq2: summability in N: high}
\|k_1^n(\tilde{x},\tilde{y})\|_{L_{\tilde{y}}^\infty L_{\tilde{x}}^1}\leq\Big(\frac{\|V\|_{\mathcal{K}}}{4\pi}\Big)^n.
\end{equation}
\end{lemma}

For the proof, we need the following lemma.
\begin{lemma}[Oscillatory integral]\label{oscillatory} For $s\geq0$,
\begin{equation}
\Big|\int_0^\infty m(\lambda)\chi_N(\sqrt{\lambda})\Im(e^{i\sqrt{\lambda}\sigma}) d\lambda\Big|\lesssim \frac{N^2}{\la N\sigma \ra^s}\|m\|_{\mathcal{H}(s)}.
\end{equation}
\end{lemma}

\begin{proof}
By abuse of notation, we denote by $\chi$ the even extension of itself. Making change of variables $\lambda\mapsto N^2\lambda^2$, we write
\begin{equation}
\begin{aligned}
\int_0^\infty m(\lambda)\chi_N(\sqrt{\lambda})\Im(e^{i\sqrt{\lambda}\sigma}) d\lambda&=N^2\int_0^\infty 2\lambda m(N^2\lambda^2)\chi(\lambda)\sin(N\lambda\sigma)  d\lambda\\
&=N^2\int_\mathbb{R}\lambda m(N^2\lambda^2)\chi(\lambda)e^{i\lambda N\sigma}d\lambda\\
&=N^2 \Big(m(N^2\lambda^2)\lambda\chi(\lambda)\Big)^\vee (N\sigma)\\
&=\frac{N^2}{\la N\sigma\ra^s}\Big(\la\nabla\ra^s(m(N^2\lambda^2)\lambda\chi(\lambda))\Big)^\vee (N\sigma).
\end{aligned}
\end{equation}
Thus, it follows from Hausdorff-Young inequality and the fractional Leibniz rule that
\begin{equation}
\begin{aligned}
\Big|\int_0^\infty m(\lambda)\chi_N(\sqrt{\lambda})\Im(e^{i\sqrt{\lambda}\sigma}) d\lambda\Big|&\leq\frac{N^2}{\la N\sigma\ra^s}\|m(N^2\lambda^2)\lambda\chi(\lambda)\|_{W^{s,1}}\\
&\lesssim\frac{N^2}{\la N\sigma\ra^s}\|m(N^2\lambda^2)\chi(\lambda)\|_{W^{s,2}}\\&\leq\frac{N^2}{\la N\sigma\ra^s}\|m\|_{\mathcal{H}(s)}.
\end{aligned}
\end{equation}
\end{proof}

\begin{proof}[Proof of Lemma \ref{summability in N: high}]
First, using the free resolvent formula, we write
\begin{equation}
\begin{aligned}
&\textup{Pb}_N^n (x,\tilde{x},\tilde{y},y)\\
&=\int_0^\infty m(\lambda)\chi_N(\sqrt{\lambda})\Im\Big\{\int_{\mathbb{R}^{3(n-1)}}\prod_{k=1}^n V(x_k)\frac{\prod_{k=0}^{n+1} e^{i\sqrt{\lambda}|x_k-x_{k+1}|}}{\prod_{k=1}^n 4\pi|x_k-x_{k+1}|}d\mathbf{x}_{(2,n)}\Big\} d\lambda\\
&=\int_{\mathbb{R}^{3(n-1)}}\frac{\prod_{k=1}^n V(x_k)}{\prod_{k=1}^n 4\pi|x_k-x_{k+1}|} \Big\{\int_0^\infty m(\lambda)\chi_N(\sqrt{\lambda})\Im(e^{i\sqrt{\lambda}\sigma_{n+1}}) d\lambda\Big\} d\mathbf{x}_{(2,n)},
\end{aligned}
\end{equation}
where $x_0:=x$, $x_1:=\tilde{x}$, $x_{n+1}:=\tilde{y}$, $x_{n+2}:=y$, $d\mathbf{x}_{(2,n)}:=dx_2\cdot\cdot\cdot dx_n$ and $\sigma_n:=\sum_{j=0}^n|x_j-x_{j+1}|$. Then, by Lemma \ref{oscillatory} with $s=s_1+s_2$ and the trivial inequality
\begin{equation}
|x_0-x_1|=|x-\tilde{x}|, |x_{n+1}-x_{n+2}|=|\tilde{y}-y|\leq \sigma_{n+1}=\sum_{j=0}^{n+1}|x_j-x_{j+1}|,
\end{equation}
we obtain that 
\begin{equation}
|\textup{Pb}_N^n (x, \tilde{x}, \tilde{y}, y)|\lesssim \frac{N^2\|m\|_{\mathcal{H}(s_1+s_2)}}{\la N(x-\tilde{x})\ra^{s_1}\la N(\tilde{y}-y)\ra^{s_2}}\int_{\mathbb{R}^{3(n-1)}}\frac{\prod_{k=1}^n |V(x_k)|}{\prod_{k=1}^n 4\pi|x_k-x_{k+1}|} d\mathbf{x}_{(2,n)}.
\end{equation}
We define
\begin{equation}
k_1^n(\tilde{x},\tilde{y}):=\int_{\mathbb{R}^{3(n-1)}}\frac{\prod_{k=1}^n |V(x_k)|}{\prod_{k=1}^n 4\pi|x_k-x_{k+1}|} d\mathbf{x}_{(2,n)}.
\end{equation}
Then, by the definition of the global Kato norm, we have
\begin{equation}
\begin{aligned}
\|k_1^n(\tilde{x},\tilde{y})\|_{L_{\tilde{y}}^\infty L_{\tilde{x}}^1}&\leq\sup_{x_{n+1}\in\mathbb{R}^3}\int_{\mathbb{R}^{3n}}\frac{\prod_{k=1}^n |V(x_k)|}{\prod_{k=1}^n 4\pi|x_k-x_{k+1}|} d\mathbf{x}_{(1,n)}\\
&\leq\Big(\sup_{x_{n}\in\mathbb{R}^3}\int_{\mathbb{R}^{3(n-1)}}\frac{\prod_{k=1}^{n-1} |V(x_k)|}{\prod_{k=1}^{n-1} 4\pi|x_k-x_{k+1}|} d\mathbf{x}_{(1,n-1)}\Big)\\
&\quad\quad\times\Big(\sup_{x_{n+1}\in\mathbb{R}^3}\int_{\mathbb{R}^3}\frac{|V(x_n)|}{4\pi|x_n-x_{n+1}|}dx_n\Big)\\
&\leq\Big(\sup_{x_{n}\in\mathbb{R}^3}\int_{\mathbb{R}^{3(n-1)}}\frac{\prod_{k=1}^{n-1} |V(x_k)|}{\prod_{k=1}^{n-1} 4\pi|x_k-x_{k+1}|} d\mathbf{x}_{(1,n-1)}\Big)\Big(\frac{\|V\|_{\mathcal{K}}}{4\pi}\Big)\\
&\leq\cdots(\textup{repeat})\cdots\leq\Big(\frac{\|V\|_{\mathcal{K}}}{4\pi}\Big)^n.
\end{aligned}
\end{equation}
\end{proof}

Next, we show summability of the intermediate kernel in $n$.

\begin{lemma}[Summability in $n$]\label{summability in n: high}
For $\epsilon>0$, there exist $N_1=N_1(V,\epsilon)\gg1$ and $k_2^n(\tilde{x},\tilde{y})\in L_{\tilde{y}}^\infty L_{\tilde{x}}^1$ such that for $N\geq N_1$,
\begin{equation}\label{eq1: summability in n: high}
|\textup{Pb}_N^n (x, \tilde{x}, \tilde{y}, y)|\lesssim \epsilon^nN^2 \|m\|_{\mathcal{H}(0)}k_2^n(\tilde{x},\tilde{y}).
\end{equation}
and
\begin{equation}\label{eq2: summability in n: high}
\|k_2^n(\tilde{x},\tilde{y})\|_{L_{\tilde{y}}^\infty L_{\tilde{x}}^1}\lesssim \epsilon^n.
\end{equation}
\end{lemma}

\begin{proof}
By Lemma \ref{VR estimate} $(iii)$, given $\epsilon>0$, there exist $N_1\gg1$ and an operator $D:L^1\to L^1$ such that $\|D(x,y)\|_{L_y^\infty L_x^1}\leq \epsilon^4$ and $|(VR_0^+(\lambda))^4(x,y)|\leq D(x,y)$. We also observe that
\begin{equation}
|(VR_0^+(\lambda))(x,y)|=\Big|V(x)\frac{e^{i\sqrt{\lambda}|x-y|}}{4\pi|x-y|}\Big|=\frac{|V(x)|}{4\pi|x-y|}=\Big(|V|(-\Delta)^{-1}\Big)(x,y).
\end{equation}
We denote by $\lfloor a\rfloor$ the largest integer less than or equal to $a$. Then, we have
\begin{equation}
\begin{aligned}
|\textup{Pb}_N^n(x,\tilde{x},\tilde{y}, y)|&\leq \int_0^\infty |m(\lambda)|\chi_N(\sqrt{\lambda})|(VR_0^+(\lambda))^n(\tilde{x},\tilde{y})|d\lambda\\
&\leq\int_0^\infty |m(\lambda)|\chi_N(\sqrt{\lambda})\Big|\Big(D^{\lfloor\frac{n}{4}\rfloor}(|V|(-\Delta)^{-1})^{n-4\lfloor\frac{n}{4}\rfloor}\Big) (\tilde{x},\tilde{y})\Big|d\lambda\\
&\lesssim N^2\int_0^\infty |m(N^2\lambda)|\chi(\lambda)d\lambda \cdot \Big|\Big(D^{\lfloor\frac{n}{4}\rfloor}(|V|(-\Delta)^{-1})^{n-4\lfloor\frac{n}{4}\rfloor}\Big) (\tilde{x},\tilde{y})\Big|\\
&\lesssim N^2 \|m\|_{\mathcal{H}(0)}\Big|\Big(D^{\lfloor\frac{n}{4}\rfloor}(|V|(-\Delta)^{-1})^{n-4\lfloor\frac{n}{4}\rfloor}\Big) (\tilde{x},\tilde{y})\Big|.
\end{aligned}
\end{equation}
We define 
\begin{equation}
k_2^n(\tilde{x},\tilde{y}):=\Big|\Big(D^{\lfloor\frac{n}{4}\rfloor}(|V|(-\Delta)^{-1})^{n-4\lfloor\frac{n}{4}\rfloor}\Big)(\tilde{x},\tilde{y})\Big|.
\end{equation}
Then, by Lemma \ref{VR estimate}, one can check \eqref{eq2: summability in n: high}.
\end{proof}

\subsection{Proof of Lemma \ref{Key} $(i)$}
Let $\delta>0$ be a sufficiently small number to be chosen later. Let $\epsilon>0$ be a small number depending on $\|V\|_{\mathcal{K}}$ and $\delta>0$ (see $(4.30)$). Then, we pick a large dyadic number $N_1$ from Lemma \ref{summability in n: high}. We will show that $\textup{Pb}_{\geq N_1}$ is bounded from $L^{\frac{3}{3-\delta},1}$ to
$L^{\frac{3}{3-\delta},\infty}$.

Let $s=\frac{2}{1-\delta}>2$ and $\theta=\frac{2-\delta}{2}$ ($\Rightarrow2\theta=2-\delta$, $(s-2)\theta>\delta$ and $s\theta>2$). Then, by Lemma \ref{summability in N: high} with $s_1=2$ and $s_2=s-2$ and Lemma \ref{summability in n: high}, we get
\begin{equation}\label{combined}
\begin{aligned}
|\textup{Pb}_N^n (x, \tilde{x}, \tilde{y}, y)|&=|\textup{Pb}_N^n (x, \tilde{x}, \tilde{y}, y)|^{\theta}|\textup{Pb}_N^n (x, \tilde{x}, \tilde{y}, y)|^{1-\theta}\\
&\lesssim \frac{N^2 \|m\|_{\mathcal{H}(s)}}{\la N(x-\tilde{x})\ra^{2\theta}\la N(\tilde{y}-y)\ra^{(s-2)\theta}}\Big(k_1^n(\tilde{x},\tilde{y})\Big)^{\theta}\Big(k_2^n(\tilde{x},\tilde{y})\Big)^{1-\theta}.
\end{aligned}
\end{equation}
We claim that 
\begin{equation}\label{sum estimate}
\sum_{N\in 2^{\mathbb{Z}}}\frac{N^2}{\la Nx\ra^{2\theta}\la Ny\ra^{(s-2)\theta}}\lesssim \frac{1}{|x|^{2-\delta}|y|^{\delta}}.
\end{equation}
Fix $x, y\in\mathbb{R}^3$, and consider the following four cases.\\
\textbf{(Case 1: $N<\min(|x|^{-1}, |y|^{-1})$)}
\begin{equation}
\sum_{\textup{Case 1}}\frac{N^2}{\la Nx\ra^{2\theta}\la Ny\ra^{(s-2)\theta}}\leq\sum_{\textup{Case 1}}N^2\leq\min\Big(\frac{1}{|x|}, \frac{1}{|y|}\Big)^2\leq\frac{1}{|x|^{2-\delta}|y|^{\delta}}.
\end{equation}
\textbf{(Case 2: $|x|^{-1}\leq N<|y|^{-1}$)}
\begin{equation}
\begin{aligned}
\sum_{\textup{Case 2}}\frac{N^2}{\la Nx\ra^{2\theta}\la Ny\ra^{(s-2)\theta}}&\leq \sum_{\textup{Case 2}}\frac{N^2}{|Nx|^{2\theta}}=\sum_{\textup{Case 2}}\frac{N^{2(1-\theta)}}{|x|^{2\theta}}=\sum_{\textup{Case 2}}\frac{N^\delta}{|x|^{2-\delta}}\\
&\leq\frac{1}{|x|^{2-\delta}|y|^{\delta}}.
\end{aligned}
\end{equation}
\textbf{(Case 3: $|y|^{-1}\leq N<|x|^{-1}$)}
\begin{equation}
\sum_{\textup{Case 3}}\frac{N^2}{\la Nx\ra^{2\theta}\la Ny\ra^{(s-2)\theta}}\leq \sum_{\textup{Case 3}}\frac{N^2}{|Ny|^{\delta}}=\sum_{\textup{Case 3}}\frac{N^{2-\delta}}{|y|^{\delta}}\leq \frac{1}{|x|^{2-\delta}|y|^{\delta}}.
\end{equation}
\textbf{(Case 4: $N\geq\max(|x|^{-1}, |y|^{-1})$)}
\begin{equation}
\begin{aligned}
\sum_{\textup{Case 4}}\frac{N^2}{\la Nx\ra^{2\theta}\la Ny\ra^{(s-2)\theta}}&\lesssim \frac{1}{|x|^{2\theta}|y|^{(s-2)\theta}}\sum_{\textup{Case 4}}\frac{1}{N^{s\theta-2}}\\
&\leq\frac{1}{|x|^{2-\delta}|y|^{(s-2)\theta}}|y|^{s\theta-2}\leq\frac{1}{|x|^{2-\delta}|y|^{2-2\theta}}=\frac{1}{|x|^{2-\delta}|y|^{\delta}}.
\end{aligned}
\end{equation}
Collecting all, we prove the claim.

Applying \eqref{sum estimate} to \eqref{combined} and summing in $N\geq N_1$, we obtain
\begin{equation}
\sum_{N\geq N_1}|\textup{Pb}_N^n (x, \tilde{x}, \tilde{y}, y)|\lesssim \frac{\|m\|_{\mathcal{H}(s)}}{ |x-\tilde{x}|^{2-\delta}|\tilde{y}-y|^\delta}\Big(k_1^n(\tilde{x},\tilde{y})\Big)^{\theta}\Big(k_2^n(\tilde{x},\tilde{y})\Big)^{1-\theta}.
\end{equation} 
Let
\begin{equation}
K(x,y)=\sum_{n=0}^\infty\Big(k_1^n(\tilde{x},\tilde{y})\Big)^{\theta}\Big(k_2^n(x,y)\Big)^{1-\theta}.
\end{equation}
Then, $K\in L_y^\infty L_x^1$, since if $N_1$ is large enough,
\begin{equation}\label{K bound}
\begin{aligned}
\|K(x,y)\|_{L_y^\infty L_x^1}&\leq\sum_{n=0}^\infty\Big\|\Big(k_1^n(\tilde{x},\tilde{y})\Big)^{\theta}\Big(k_2^n(x,y)\Big)^{1-\theta}\Big\|_{L_y^\infty L_x^1}\\
&\leq\sum_{n=0}^\infty\|k_1^n(\tilde{x},\tilde{y})\|_{L_y^\infty L_x^1}^\theta\|k_2^n(x,y)\|_{L_y^\infty L_x^1}^{1-\theta}\\
&\leq\sum_{n=0}^\infty\Big(\frac{\|V\|_{\mathcal{K}}}{4\pi}\Big)^{n\theta}\epsilon^n<\infty,
\end{aligned}
\end{equation}
where $\epsilon>0$ is chosen so that $(\frac{\|V\|_{\mathcal{K}}}{4\pi})^{\theta}\epsilon<1$ with $\theta=\frac{2-\delta}{2}$. Therefore, we obtain the kernel estimates for $\textup{Pb}_{\geq N_1}(x,y)$,
\begin{equation}
\begin{aligned}
|\textup{Pb}_{N_1}(x,y)|&\leq \int_{\mathbb{R}^6}\frac{V(\tilde{y})}{16\pi^3|x-\tilde{x}||\tilde{y}-y|}\sum_{N\geq N_1}\sum_{n=0}^\infty |\textup{Pb}_N^n (x, \tilde{x}, \tilde{y}, y)|d\tilde{x} d\tilde{y}\\
&\lesssim\int_{\mathbb{R}^6}\frac{|V(\tilde{y})|}{|x-\tilde{x}|^{3-\delta}|\tilde{y}-y|^{1+\delta}}K(\tilde{x},\tilde{y})d\tilde{x}d\tilde{y},
\end{aligned}
\end{equation}
with $K\in L_y^\infty L_x^1$.

Let $T_K$ be the integral operator with kernel $K(x,y)$, which is bounded on $L^1$ by \eqref{K bound}. By the fractional integration inequality and H\"older inequality in the Lorentz spaces (see Appendix A), we conclude that
\begin{equation}\label{key estimates}
\begin{aligned}
&\|\textup{Pb}_{\geq N_1}f\|_{L^{\frac{3}{3-\delta},\infty}}\\
&\lesssim\Big\|\int_{\mathbb{R}^9}\frac{|V(\tilde{y})|}{|x-\tilde{x}|^{3-\delta}|\tilde{y}-y|^{1+\delta}}K(\tilde{x},\tilde{y})|f(y)|d\tilde{x}d\tilde{y}dy\Big\|_{L_x^{\frac{3}{3-\delta},\infty}}\\
&\lesssim\||\nabla|^{-\delta}T_K(|V||\nabla|^{-(2-\delta)}(|f|))\|_{L^{\frac{3}{3-\delta},\infty}}\lesssim\|T_K(|V||\nabla|^{-(2-\delta)}(|f|))\|_{L_{x}^1}\\
&\lesssim\||V||\nabla|^{-(2-\delta)}(|f|)\|_{L_{x}^1}\leq\|V\|_{L^{3/2,\infty}}\||\nabla|^{-(2-\delta)}|f|\|_{L^{3,1}}\lesssim\|f\|_{L^{\frac{3}{3-\delta},1}}.
\end{aligned}
\end{equation}

\begin{remark}
In \eqref{key estimates}, we only used the fractional integration inequality and the H\"older inequality.  Note that after applying the fractional integration inequality, we always have the $L^{p,q}$-norm with smaller $p$ on the right hand side, although we want to show the $L^{\frac{3}{3-\epsilon},1}-L^{\frac{3}{3-\epsilon},\infty}$ boundedness. Hence, one must have at least one chance to raise the number $p$ to compensate the decrease of $p$ caused by the fractional integration inequalities. In \eqref{key estimates}, the potential $V$ plays such a role with the H\"older inequality. This is the main reason that we keep one extra potential term $V$ in the spectral representation by considering the perturbation $m(H)P_c-m(-\Delta)$ instead of $m(H)P_c$, and introducing  intermediated kernels $\textup{Pb}_N^n(x,\tilde{x},\tilde{y},y)$, even though they look rather artificial.
\end{remark}

\section{Low Frequency Estimate: Proof of Lemma \ref{Key} $(ii)$}

\subsection{Construction of the formal series expansion}
We prove Lemma \ref{Key} $(ii)$ by modifying the argument in Section 4. Note that for small $N$, the formal series expansion \eqref{formal series1: high} may not be convergent, since $(VR_0^+(\lambda))^4$ in \eqref{formal series1: high} is not small anymore. Hence, we introduce a new series expansion for $(I+VR_0^+(\lambda))^{-1}$,
\begin{equation}\label{formal series1: low}
\begin{aligned}
(I+VR_0^+(\lambda))^{-1}&=(I+VR_0^+(\lambda_0)+B_{\lambda,\lambda_0})^{-1}\\
&=[(I+B_{\lambda,\lambda_0}S_{\lambda_0})(I+VR_0^+(\lambda_0))]^{-1}\\
&=(I+VR_0^+(\lambda_0))^{-1}(I+B_{\lambda,0}S_{\lambda_0})^{-1}\\
``&="S_{\lambda_0}\sum_{n=0}^\infty (-B_{\lambda,\lambda_0}S_{\lambda_0})^n,
\end{aligned}
\end{equation}
where $B_{\lambda,\lambda_0}=V(R_0^+(\lambda)-R_0^+(\lambda_0))$ and $S_{\lambda_0}=(I+VR_0^+(\lambda_0))^{-1}$. Plugging the formal series $(\ref{formal series1: low})$ with $\lambda_0=0$ into \eqref{PbN}, we write 
\begin{equation}\label{formal series2: low}
\textup{Pb}_N``="\sum_{n=0}^\infty \frac{(-1)^{n+1}}{\pi}\int_0^\infty m(\lambda)\chi_N(\sqrt{\lambda})\Im[R_0^+(\lambda)S_{0}(B_{\lambda,0}S_{0})^nVR_0^+(\lambda)]d\lambda.
\end{equation}
As in the previous section, writing the first and the last free resolvents explicitly by the free resolvent formula $R_0^+(\lambda)(x,y)=\frac{e^{i\sqrt{\lambda}|x-y|}}{4\pi|x-y|}$ and collecting terms having $\lambda$ by Fubini theorem, we write the kernel of $\textup{Pb}_N$ as
\begin{equation}\label{formal series3: low}
\begin{aligned}
&\textup{Pb}_N(x,y)\\
``&="\sum_{n=0}^\infty \frac{(-1)^{n+1}}{\pi}\int_0^\infty m(\lambda)\chi_N(\sqrt{\lambda})\\
&\quad\quad\quad\quad\times\Im\Big[\iint_{\mathbb{R}^6}\frac{e^{i\sqrt{\lambda}|x-\tilde{x}|}}{4\pi|x-\tilde{x}|}[S_{0}(B_{\lambda,0}S_{0})^n](\tilde{x},\tilde{y})V(\tilde{y})\frac{e^{i\sqrt{\lambda}|\tilde{y}-y|}}{4\pi|\tilde{x}-y|}d\tilde{x}d\tilde{y}\Big]d\lambda\\
&=\iint_{\mathbb{R}^6}\frac{V(\tilde{y})}{16\pi^3|x-\tilde{x}||\tilde{y}-y|}\Big[\sum_{n=0}^\infty (-1)^{n+1}\textup{Pb}_N^n(x,\tilde{x},\tilde{y}, y)\Big]d\tilde{x}d\tilde{y},
\end{aligned}
\end{equation}
where
\begin{equation}\label{PbNn: low}
\textup{Pb}_N^n(x,\tilde{x},\tilde{y}, y)=\int_0^\infty m(\lambda)\chi_N(\sqrt{\lambda})\Im[e^{i\sqrt{\lambda}(|x-\tilde{x}|+|\tilde{y}-y|)}[S_{0}(B_{\lambda,0}S_{0})^n](\tilde{x},\tilde{y})]d\lambda.
\end{equation}
By Lemma \ref{VR estimate} $(ii)$, $B_{\lambda,0}$ in \eqref{formal series3: low} is small for sufficiently small $N$. This fact will guarantee the convergence of the formal series.

\subsection{Intermediate kernel estimates}
We will show the kernel estimates analogous to Lemma \ref{summability in N: high} and \ref{summability in n: high}. Then, Lemma \ref{Key} $(ii)$ will follow from exactly the same argument in Section 4.3, thus we omit the proof.

\begin{lemma}[Summability in $N$]\label{summability in N: low} There exists $k_1^n(\tilde{x},\tilde{y})$ such that for $s_1, s_2\geq0$,
\begin{equation}\label{eq1: summability in N: low}
|\textup{Pb}_N^n (x, \tilde{x}, \tilde{y}, y)|\lesssim \frac{N^2\|m\|_{\mathcal{H}(s_1+s_2)}}{\la N(x-\tilde{x})\ra^{s_1}\la N(\tilde{y}-y)\ra^{s_2}}k_1^n(\tilde{x},\tilde{y})
\end{equation}
and
\begin{equation}\label{eq2: summability in N: low}
\|k_1^n(\tilde{x},\tilde{y})\|_{L_{\tilde{y}}^\infty L_{\tilde{x}}^1}\leq(\tilde{S}+1)^{n+1}\Big(\frac{\|V\|_{\mathcal{K}}}{2\pi}\Big)^n,
\end{equation}
where $\tilde{S}$ is the positive number given by $(\ref{tildeS})$.
\end{lemma}

\begin{proof}
First, splitting $B_{\lambda,0}$ into $VR_0^+(\lambda)-VR_0^+(0)$ in
\begin{equation}
\textup{Pb}_N^n(x,\tilde{x},\tilde{y}, y)=\int_0^\infty m(\lambda)\chi_N(\sqrt{\lambda})\Im[e^{i\sqrt{\lambda}(|x-\tilde{x}|+|\tilde{y}-y|)}[S_{0}(B_{\lambda,0}S_{0})^n](\tilde{x},\tilde{y})]d\lambda,
\end{equation}
we write $\textup{Pb}_N^n(x,\tilde{x},\tilde{y}, y)$ as the sum of $2^n$ copies of
\begin{equation}\label{eq:piece}
\int_0^\infty m(\lambda)\chi_N(\sqrt{\lambda})\Im[e^{i\sqrt{\lambda}(|x-\tilde{x}|+|\tilde{y}-y|)}[S_0VR_0^+(\alpha_1\lambda)S_0\cdot\cdot\cdot VR_0^+(\alpha_n\lambda)S_0](\tilde{x},\tilde{y})]d\lambda
\end{equation}
up to $\pm$, where $\alpha_k=0\textup{ or }1$ for each $k=1, ..., n$. Next, splitting all $S_0$ into $I$ and $\tilde{S}_0$ in $(\ref{eq:piece})$, we further decompose $(\ref{eq:piece})$ into the sum of $2^{n+1}$ kernels.

Among them, let us consider the two representative terms,
\begin{align}
&\Im\int_0^\infty m(\lambda)\chi_N(\sqrt{\lambda})e^{i\sqrt{\lambda}(|x-\tilde{x}|+|\tilde{y}-y|)}[\tilde{S}_0VR_0^+(\alpha_1\lambda)\tilde{S}_0\cdot\cdot\cdot VR_0^+(\alpha_n\lambda)\tilde{S}_0](\tilde{x},\tilde{y})d\lambda,\label{example1}\\
&\Im\int_0^\infty m(\lambda)\chi_N(\sqrt{\lambda})e^{i\sqrt{\lambda}(|x-\tilde{x}|+|\tilde{y}-y|)}[VR_0^+(\alpha_1\lambda)\cdot\cdot\cdot VR_0^+(\alpha_n\lambda)](\tilde{x},\tilde{y})d\lambda.\label{example2}
\end{align}
For the first term, by the free resolvent formula \eqref{free resolvent formula}, we write \eqref{example1} in the integral form,
\begin{equation}
\begin{aligned}
&\Im\int_0^\infty \int_{\mathbb{R}^{6n}}m(\lambda)\chi_N(\sqrt{\lambda})\prod_{k=1}^{n+1}\tilde{S}_0(x_{2k-1}, x_{2k})\\
&\quad\quad\quad\quad\quad\quad\quad\quad\quad\quad\times\prod_{k=1}^n V(x_{2k})\frac{\prod_{k=0}^{n+1} e^{i{\alpha_k\sqrt{\lambda}}|x_{2k}-x_{2k+1}|}}{\prod_{k=1}^n 4\pi|x_{2k}-x_{2k+1}|}d\mathbf{x}_{(2,2n+1)} d\lambda\\
&=\int_{\mathbb{R}^{6n}}\frac{\prod_{k=1}^{n+1}\tilde{S}_0(x_{2k-1}, x_{2k})\prod_{k=1}^n V(x_{2k})}{\prod_{k=1}^n 4\pi|x_{2k}-x_{2k+1}|}\\
&\quad\quad\quad\quad\quad\quad\quad\quad\quad\quad\times\Big\{\int_0^\infty m(\lambda)\chi_N(\sqrt{\lambda})\Im(e^{i\sqrt{\lambda}\tilde{\sigma}_{n+1}})d\lambda\Big\}d\mathbf{x}_{(2,2n+1)}
\end{aligned}
\end{equation}
where $x_0:=x$, $x_1:=\tilde{x}$, $x_{2n+2}:=\tilde{y}$, $x_{2n+3}:=y$, $d\mathbf{x}_{(2,n)}:=dx_2\cdot\cdot\cdot dx_n$, $\tilde{\sigma}_n:=\sum_{k=0}^{n+1} \alpha_k|x_{2k}-x_{2k+1}|$ and $\alpha_0=\alpha_{n+1}=1$. Then, by Lemma \ref{oscillatory} with $s=s_1+s_2$ and $|x_0-x_1|, |x_{2n+2}-x_{2n+3}|\leq \tilde{\sigma}_{n+1}$, we obtain that 
\begin{equation}\label{oscillatory2}
\Big|\int_0^\infty m(\lambda)\chi_N(\sqrt{\lambda})\Im(e^{i\sqrt{\lambda}\tilde{\sigma}_{n+1}})d\lambda\Big|\lesssim\frac{N^2\|m\|_{\mathcal{H}(s_1+s_2)}}{\la N(x_0-x_1)\ra^{s_1}\la N(x_{2n+2}-x_{2n+3})\ra^{s_2}}.
\end{equation}
Applying \eqref{oscillatory2} to $\eqref{example1}$, we get the arbitrary polynomial decay away from $x_0=x_1$,
\begin{equation}
|\eqref{example1}|\lesssim \frac{N^2\|m\|_{\mathcal{H}(s_1+s_2)}k_{\eqref{example1}}^n(\tilde{x},\tilde{y})}{\la N(x_0-x_1)\ra^{s_1}\la N(x_{2n+2}-x_{2n+3})\ra^{s_2}}=\frac{N^2\|m\|_{\mathcal{H}(s_1+s_2)}k_{\eqref{example1}}^n(\tilde{x},\tilde{y})}{\la N(x-\tilde{x})\ra^{s_1}\la N(\tilde{y}-y)\ra^{s_2}},
\end{equation}
where
\begin{equation}
\begin{aligned}
k_{\eqref{example1}}^n(\tilde{x},\tilde{y}):&=\int_{\mathbb{R}^{6n}}\frac{\prod_{k=1}^{n+1}|\tilde{S}_0(x_{2k-1}, x_{2k})|\prod_{k=1}^n |V(x_{2k})|}{\prod_{k=1}^n 4\pi|x_{2k}-x_{2k+1}|}d\mathbf{x}_{(2,2n+1)}\\
&=[|\tilde{S}_0|(|V|(-\Delta)^{-1}|\tilde{S}_0|)^n](\tilde{x},\tilde{y})
\end{aligned}
\end{equation}
and $|\tilde{S}_0|$ is the integral operator with kernel $|\tilde{S}_0(x,y)|$. We claim that
\begin{equation}
\|k_{\eqref{example1}}^n(\tilde{x},\tilde{y})\|_{L_y^\infty L_{x_1}^1}\lesssim \tilde{S}^{n+1}(\|V\|_{\mathcal{K}}/4\pi)^n.
\end{equation}
Indeed, since $\||\tilde{S}_0|(|V|(-\Delta)^{-1}|\tilde{S}_0|)^n f\|_{L^1}\leq \tilde{S}^{n+1}(\frac{\|V\|_{\mathcal{K}}}{4\pi})^n\|f\|_{L^1}$ and $|\tilde{S}_0|(|V|(-\Delta)^{-1}|\tilde{S}_0|)^n$ is an integral operator, sending $f\to \delta(\cdot-y)$, we prove the claim.

Similarly, we write $\eqref{example2}$ as
\begin{equation}
\begin{aligned}
&\Im\int_0^\infty \int_{\mathbb{R}^{3n-3}} m(\lambda)\chi_N(\sqrt{\lambda})\prod_{k=1}^n V(x_{k})\frac{\prod_{k=0}^{n+1} e^{i{\alpha_k\sqrt{\lambda}}|x_k-x_{k+1}|}}{\prod_{k=1}^n 4\pi|x_{k}-x_{k+1}|}d\mathbf{x}_{(2,n)} d\lambda\\
&=\int_{\mathbb{R}^{3n-3}}\frac{\prod_{k=1}^n V(x_{k})}{\prod_{k=1}^n 4\pi|x_{k}-x_{k+1}|} \Big\{\int_0^\infty m(\lambda)\chi_N(\sqrt{\lambda}) \Im(e^{i\sqrt{\lambda}\tilde{\tilde{\sigma}}_{n+1}})d\lambda\Big\}d\mathbf{x}_{(2,n)}
\end{aligned}
\end{equation}
where $x_0:=x$, $x_1:=\tilde{x}$, $x_{n+1}:=\tilde{y}$, $x_{n+2}:=y$, $\alpha_0=\alpha_{n+2}=1$ and $\tilde{\tilde{\sigma}}_n:=\sum_{k=0}^n\alpha_k|x_k-x_{k+1}|$. Then, by Lemma \ref{oscillatory} with $s=s_1+s_2$ and $|x_0-x_1|, |x_{n+1}-x_{n+2}|\leq \tilde{\tilde{\sigma}}_{n+1}$, we obtain that 
\begin{equation}
|\eqref{example2}|\lesssim \frac{N^2\|m\|_{\mathcal{H}(s_1+s_2)}k_{\eqref{example2}}^n(\tilde{x},\tilde{y})}{\la N(x_0-x_1)\ra^{s_1}\la N(x_{n+1}-x_{n+2})\ra^{s_2}}= \frac{N^2\|m\|_{\mathcal{H}(s_1+s_2)}k_{\eqref{example2}}^n(\tilde{x},\tilde{y})}{\la N(x-\tilde{x})\ra^{s_1}\la N(\tilde{y}-y)\ra^{s_2}}
\end{equation}
where
\begin{equation}
k_{\eqref{example2}}^n(\tilde{x},\tilde{y}):=\int_{\mathbb{R}^{3n-3}}\frac{\prod_{k=1}^n |V(x_{k})|}{\prod_{k=1}^n 4\pi|x_{k}-x_{k+1}|}d\mathbf{x}_{(2,n)}=(4\pi)^{-n}(|V|(-\Delta)^{-1})^n(\tilde{x},\tilde{y}).
\end{equation}
Then by the definition of the global Kato norm, we prove that 
\begin{equation}
\begin{aligned}
\|k_{\eqref{example2}}^n(\tilde{x},\tilde{y})\|_{L_y^\infty L_{x_1}^1}\leq (\|V\|_{\mathcal{K}}/4\pi)^n.
\end{aligned}
\end{equation}
Similarly, we estimate other kernels, and define $k_1^n(\tilde{x},\tilde{y})$ as the sum of all $2^{2n+1}$ many upper bounds including $K_{\eqref{example1}}(\tilde{x},\tilde{y})$ and $K_{\eqref{example2}}(\tilde{x},\tilde{y})$. Then, $k_1^n(\tilde{x},\tilde{y})$ satisfies \eqref{eq1: summability in N: low} and \eqref{eq2: summability in N: low}.
\end{proof}

\begin{lemma}[Summability in $n$]\label{summability in n: low}
For any $\epsilon>0$, there exist a small number $N_0=N_0(V,\epsilon)\ll1$ and $k_2^n(\tilde{x},\tilde{y})\in L_{\tilde{y}}^\infty L_{\tilde{x}}^1$ such that for $N\leq N_0$,
\begin{equation}\label{eq1: summability in n: low}
|\textup{Pb}_N^n (x, \tilde{x}, \tilde{y}, y)|\lesssim N^2 \|m\|_{\mathcal{H}(s)}k_2^n(\tilde{x},\tilde{y})
\end{equation}
and
\begin{equation}\label{eq2: summability in n: low}
\|k_2^n(\tilde{x},\tilde{y})\|_{L_{\tilde{y}}^\infty L_{\tilde{x}}^1}\leq \epsilon^n.
\end{equation}
\end{lemma}

\begin{proof} 
Fix small $\epsilon>0$. Then, by Lemma \ref{VR estimate} $(ii)$, we choose small $N_0:=\delta=\delta(\epsilon)>0$ and an integral operator $B$ such that $|B_{\lambda,0}(x,y)|\leq B(x,y)$ for $0\leq\lambda\leq N_0$, and
\begin{equation}
\|B\|_{L^1\to L^1}\leq \epsilon(\tilde{S}+1)^{-1},
\end{equation}
where $\tilde{S}$ is a positive number given from \eqref{tildeS}. We define
$$k_2^n(\tilde{x},\tilde{y}):=[(I+|\tilde{S}_0|)(B(I+|\tilde{S}_0|))^n](\tilde{x},\tilde{y}),$$
where $|\tilde{S}_0|$ is the integral operator with $|\tilde{S}_0(x,y)|$ as kernel. Then, by definitions (see \eqref{PbNn: low}), one can check that $k_2^n(\tilde{x},\tilde{y})$ satisfies \eqref{eq1: summability in n: low}. For \eqref{eq2: summability in n: low}, splitting $(I+|\tilde{S}_0|)$ into $I$ and $|\tilde{S}_0|$ in $k_2^n(\tilde{x},\tilde{y})$, we get $2^{n+1}$ terms,
\begin{equation}
k_2^n(\tilde{x},\tilde{y})=[|\tilde{S}_0|(B|\tilde{S}_0|)^n](\tilde{x},\tilde{y})+\cdot\cdot\cdot+B^n(\tilde{x},\tilde{y}).
\end{equation}
For example, we consider $|\tilde{S}_0|(B|\tilde{S}_0|)^n$ and $B^n$. Since both $|\tilde{S}_0|$ and $B$ are integral operators, by Lemma 3.4 and $(5.12)$, we obtain
\begin{equation}
\begin{aligned}
\|[|\tilde{S}_0|(B|\tilde{S}_0|)^n](\tilde{x},\tilde{y})\|_{L_{\tilde{y}}^\infty L_{\tilde{x}}^1}&=\||\tilde{S}_0|(B|\tilde{S}_0|)^n\|_{L^1\to L^1}\leq \tilde{S}^{n+1}\Big(\epsilon(\tilde{S}+1)^{-1}\Big)^{n},\\
\|B^n(\tilde{x},\tilde{y})\|_{L_{\tilde{y}}^\infty L_{\tilde{x}}^1}&=\|B^n\|_{L^1\to L^1}\leq\Big(\epsilon(\tilde{S}+1)^{-1}\Big)^{n}.
\end{aligned}
\end{equation}
Similarly, we estimate other $2^{n+1}-2$ terms. Summing them up, we prove \eqref{eq2: summability in n: low}.
\end{proof}

\section{Medium Frequency Estimate: Proof of Lemma \ref{Key} $(iii)$}
The proof closely follows from that of Lemma \ref{Key} $(ii)$, so we only sketch the proof. For $\epsilon>0$, we take $\delta=\delta(\epsilon)>0$ from Lemma \ref{VR estimate} $(ii)$. We choose a partition of unity function $\psi\in C_c^\infty$ such that $\supp\psi\subset[-\delta, \delta]$, $\psi(\lambda)=1$ if $|\lambda|\leq \frac{\delta}{3}$ and $\sum_{j=1}^\infty\psi(\cdot-\lambda_j)\equiv1$ on $(0,+\infty)$, where $\lambda_j=j\delta$.

Let $N_0$ and $N_1$ be dyadic numbers chosen in the previous sections. For $N_0\leq N\leq N_1$, we first decompose $\chi_N(\sqrt{\lambda})$ in $\textup{Pb}_N$ (see \eqref{PbN}) into $\chi_N(\sqrt{\lambda})=\sum_{j=N/2\delta}^{2N/\delta}\chi_N^j(\lambda)$ where $\chi_N^j(\lambda)=\chi_N(\sqrt{\lambda})\psi(\lambda-\lambda_j)$. Plugging the formal series $(\ref{formal series1: low})$ with $\lambda_0=\lambda_j$ into each integral, we write the kernel of $\textup{Pb}_N$ as
\begin{equation}\label{formal series1: medium}
\textup{Pb}_N(x,y)``="\iint_{\mathbb{R}^6}\frac{V(\tilde{y})}{16\pi^3|x-\tilde{x}||\tilde{y}-y|}\Big[\sum_{n=0}^\infty (-1)^{n+1}\textup{Pb}_N^n(x,\tilde{x},\tilde{y}, y)\Big]d\tilde{x}d\tilde{y},
\end{equation}
where
\begin{equation}\label{PbNn: medium}
\begin{aligned}
&\textup{Pb}_N^n(x,\tilde{x},\tilde{y}, y)\\
&=\sum_{j=N/2\delta}^{2N/\delta}\int_0^\infty m(\lambda)\chi_N^j(\sqrt{\lambda})\Im[e^{i\sqrt{\lambda}(|x-\tilde{x}|+|\tilde{y}-y|)}[S_{\lambda_j}(B_{\lambda,\lambda_j}S_{\lambda_j})^n](\tilde{x},\tilde{y})]d\lambda.
\end{aligned}
\end{equation}
By the arguments in the previous sections, for Lemma \ref{Key} $(iii)$, it suffices to show the following two lemmas:
\begin{lemma}[Summability in $N$]\label{summability in N} For $N_0<N<N_1$, there exists $k_{N,1}^n(\tilde{x},\tilde{y})$ such that for $s_1, s_2\geq0$,
\begin{equation}\label{eq1: summability in N}
|\textup{Pb}_N^n (x, \tilde{x}, \tilde{y}, y)|\lesssim \frac{N^2\|m\|_{\mathcal{H}(s_1+s_2)}k_{N,1}^n(\tilde{x},\tilde{y})}{\la N(x-\tilde{x})\ra^{s_1}\la N(\tilde{y}-y)\ra^{s_2}},
\end{equation}
and
\begin{equation}\label{eq2: summability in N}
\|k_{N,1}^n(\tilde{x},\tilde{y})\|_{L_{\tilde{y}}^\infty L_{\tilde{x}}^1}\leq(\tilde{S}+1)^{n+1}\Big(\frac{\|V\|_{\mathcal{K}}}{2\pi}\Big)^n.
\end{equation}
\end{lemma}

\begin{proof}
For instance, consider
\begin{equation}\label{example: medium}
\int_0^\infty m(\lambda)\chi_N^j(\lambda)\Im[e^{i\sqrt{\lambda}(|x-\tilde{x}|+|\tilde{y}-y|)}\{S_{\lambda_j}(B_{\lambda,\lambda_j}S_{\lambda_j})^n\}(\tilde{x},\tilde{y})]d\lambda
\end{equation}
among $O(N)$-many similar integrals in \eqref{PbNn: medium}. As we did in Lemma \ref{oscillatory}, we show that 
\begin{equation}\label{oscillatory: medium}
\Big|\int_0^\infty m(\lambda)\chi_N^j(\lambda)\Im(e^{i\sqrt{\lambda}\sigma})d\lambda\Big|\lesssim_{N_0, N_1}\frac{N\|m\|_{\mathcal{H}(s)}}{ \la N\sigma\ra^s}.
\end{equation}
Repeating the proof of Lemma 5.1 (but replacing $S_0$ and $B_{\lambda,0}$ by $S_{\lambda_j}$ and $B_{\lambda,\lambda_j}$ and applying \eqref{oscillatory: medium} instead of Lemma \ref{oscillatory}), one can find $k_{N,j,1}^n(\tilde{x},\tilde{y})$ such that for $s_1, s_2\geq0$,
\begin{align}
|\eqref{example: medium}|&\lesssim \frac{N\|m\|_{\mathcal{H}(s_1+s_2)}k_{N,j,1}^n(\tilde{x},\tilde{y})}{\la N(x-\tilde{x})\ra^{s_1}\la N(\tilde{y}-y)\ra^{s_2}},\\
\|k_{N,j,1}^n(\tilde{x},\tilde{y})\|_{L_{\tilde{y}}^\infty L_{\tilde{x}}^1}&\leq(\tilde{S}+1)^{n+1}\Big(\frac{\|V\|_{\mathcal{K}}}{2\pi}\Big)^n.
\end{align}
Define
$$k_{N,1}^n(\tilde{x},\tilde{y}):=\frac{\delta}{N}\sum_{j=N/2\delta}^{2N/\delta}k_{N,j,1}^n(\tilde{x},\tilde{y}),$$
then it satisfies \eqref{eq1: summability in N} and \eqref{eq2: summability in N}.
\end{proof}

\begin{lemma}[Summability in $n$]
Let $\epsilon>0$ be a small number chosen at the beginning of this section. For $N_0<N<N_1$, there exists $k_{N,2}^n(\tilde{x},\tilde{y})$ such that
\begin{equation}
|\textup{Pb}_N^n (x, \tilde{x}, \tilde{y}, y)|\lesssim N^2 \|m\|_{\mathcal{H}(s)}k_{N,2}^n(\tilde{x},\tilde{y}),
\end{equation}
and
\begin{equation}
\|k_{N,2}^n(\tilde{x},\tilde{y})\|_{L_{\tilde{y}}^\infty L_{\tilde{x}}^1}\leq (1+\tilde{S})^{n+1}\epsilon^n.
\end{equation}
\end{lemma}

\begin{proof}
Again, we consider \eqref{example: medium}. By the choice of $\epsilon$ and $\delta$ and Lemma \ref{VR estimate} $(ii)$, there exists an integral operator $B$ such that $|B_{\lambda,\lambda_j}(x,y)|\leq B(x,y)$ for $|\lambda-\lambda_j|<\delta$, $\lambda,\lambda_j\geq0$, and $\|B\|_{L^1\to L^1}\leq \epsilon$. Let $|\tilde{S}_{\lambda_j}|$ be the integral operator with integral kernel $|\tilde{S}_{\lambda_j}(x,y)|$. Then, we have 
\begin{equation}
|(6.5)|\lesssim N\|m\|_{\mathcal{H}(s)}[(I+|\tilde{S}_{\lambda_j}|)(B(I+|\tilde{S}_{\lambda_j}|))^n](\tilde{x},\tilde{y})
\end{equation}
and
\begin{equation}
\|[(I+|\tilde{S}_{\lambda_j}|)(B(I+|\tilde{S}_{\lambda_j}|))^n](\tilde{x},\tilde{y})\|_{L_{\tilde{y}}^\infty L_{\tilde{x}}^1}\leq (1+\tilde{S})^{n+1}\epsilon^n.
\end{equation}
Therefore, we define
\begin{equation}
k_2^n(\tilde{x},\tilde{y}):=\frac{\delta}{N}\sum_{j=N/2\delta}^{2N/\delta}[(I+|\tilde{S}_{\lambda_j}|)(B(I+|\tilde{S}_{\lambda_j}|))^n](\tilde{x},\tilde{y}),
\end{equation}
then it satisfies $(6.9)$ and $(6.10)$.
\end{proof}

\section{Application to the Nonlinear Schr\"odinger Equation}

\subsection{Two norm estimates}
Following the argument in \cite{DFVV}, we begin with proving the boundedness of the imaginary power operators. For $\alpha\in\mathbb{R}$, the imaginary power operator $H^{i\alpha}P_c$ is defined as a spectral multiplier of symbol $\lambda^{i\alpha}1_{[0,+\infty)}$. We consider $H^{i\alpha}P_c$ instead of $H^{i\alpha}$ just for convenience's sake. Indeed, by the assumptions, $H$ has only finitely many negative eigenvalues, and the projection $P_{\lambda_j}$ is bounded on $L^r$ for any $1<r<\infty$ (see  Lemma \ref{eigenfunctions}). Therefore, the boundedness of $H^{i\alpha}P_c$ implies that of $H^{i\alpha}=H^{i\alpha}P_c+\sum \lambda_j^{i\alpha}P_{\lambda_j}$, where $\lambda_j$'s are negative eigenvalues of $H$.

\begin{lemma}[Imaginary power operator]\label{lem:ImaginaryPower} If $V\in\mathcal{K}_0\cap L^{3/2,\infty}$ and $H$ has no eigenvalue or resonance on $[0,+\infty)$, then for $\alpha\in\mathbb{R}$,
\begin{equation}
\|H^{i\alpha}P_c\|_{L^r\to L^r}\lesssim \la\alpha\ra^3,\ 1<r<\infty.
\end{equation}
\end{lemma}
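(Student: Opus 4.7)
The plan is to deduce Lemma \ref{lem:ImaginaryPower} as an immediate corollary of Theorem \ref{MainTheorem} applied to the symbol $m(\lambda):=\lambda^{i\alpha}\mathbf{1}_{[0,+\infty)}(\lambda)$. Since $H$ has no eigenvalue or resonance on $[0,+\infty)$ and $V\in\mathcal{K}_0\cap L^{3/2,\infty}$, Theorem \ref{MainTheorem} is applicable, and everything reduces to establishing the estimate $\|m\|_{\mathcal{H}(6)}\lesssim\la\alpha\ra^6$.

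First I would exploit the homogeneity of $\lambda^{i\alpha}$. For this particular $m$ and any $t>0$,
$$\chi(\lambda)\,m((t\lambda)^2)=\chi(\lambda)(t\lambda)^{2i\alpha}=t^{2i\alpha}\,\chi(\lambda)\lambda^{2i\alpha},$$
and since $|t^{2i\alpha}|=1$, the $W_\lambda^{6,2}$-norm is independent of $t$. Thus
$$\|m\|_{\mathcal{H}(6)}=\|\chi(\lambda)\lambda^{2i\alpha}\|_{W^{6,2}((0,+\infty))}.$$

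Next, since $\chi$ is supported in $[\tfrac{1}{2},2]$, the problem is reduced to a Sobolev estimate on a compact interval where $\lambda$ is bounded above and below. A direct computation yields
$$\partial_\lambda^k(\lambda^{2i\alpha})=\Big(\prod_{j=0}^{k-1}(2i\alpha-j)\Big)\lambda^{2i\alpha-k},$$
so $|\partial_\lambda^k(\lambda^{2i\alpha})|\lesssim\la\alpha\ra^k$ uniformly on $[\tfrac{1}{2},2]$ for each $0\le k\le 6$. The Leibniz rule applied to $\chi(\lambda)\lambda^{2i\alpha}$, together with $\chi\in C_c^\infty$, then gives
$$\|\chi(\lambda)\lambda^{2i\alpha}\|_{W^{6,2}}\lesssim\la\alpha\ra^6.$$
Combining these and invoking Theorem \ref{MainTheorem} completes the proof.

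There is no real obstacle here: the only subtlety is confirming that the $t$-independence of $\|\chi(\lambda)m((t\lambda)^2)\|_{W^{6,2}}$ indeed holds for the imaginary power symbol (so that the scaling supremum in the definition of $\mathcal{H}(6)$ is harmless), and that the polynomial factor $\la\alpha\ra^6$ produced by differentiating $\lambda^{2i\alpha}$ six times is controlled by the regularity index $s=6$ allowed by Theorem \ref{MainTheorem}. This is precisely why $s=6$ appears in the statement of the lemma: the power of $\la\alpha\ra$ matches the order of regularity required in the spectral multiplier theorem.
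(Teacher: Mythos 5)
Your proof is correct and takes essentially the same approach as the paper, which simply asserts $\|\lambda^{i\alpha}1_{[0,+\infty)}\|_{\mathcal{H}(6)}\lesssim\la\alpha\ra^6$ and invokes Theorem \ref{MainTheorem}; you have merely supplied the routine verification of the norm bound (scaling invariance of the imaginary power, then Leibniz on the compact support of $\chi$) that the paper leaves implicit.
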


\begin{proof}
Since $\|\lambda^{i\alpha}1_{[0,+\infty)}\|_{\mathcal{H}(3)}\lesssim\la\alpha\ra^3$, the lemma follows from Theorem \ref{main theorem}.
\end{proof}

\begin{proposition}[Two norm estimates]\label{cor:Riesz}If $V\in\mathcal{K}_0\cap L^{3/2,\infty}$ and $H$ has no eigenvalue or resonance on $[0,+\infty)$, then for $0\leq s\leq 2$ and $1<r<\frac{3}{s}$,
\begin{align}
\|H^\frac{s}{2}P_c(-\Delta)^{-\frac{s}{2}}f\|_{L^r}&\lesssim\|f\|_{L^r},\label{two norm estimates1} \\
\|(-\Delta)^{\frac{s}{2}}H^{-\frac{s}{2}}P_cf\|_{L^r}&\lesssim\|f\|_{L^r}.\label{two norm estimates2}
\end{align}
\end{proposition}

\begin{proof}
\eqref{two norm estimates1}: Pick $f,g\in L^1\cap L^\infty$ such that $\supp\hat{f}\subset B(0,R)\setminus B(0,r)$, $P_{n\leq\cdot\leq N}g=P_cg$ for some $R, r, N, n>0$. Note that by Lemma 3.5, the collection of such $f$ ($g$, resp) is dense in $L^r$ ($L^{r'}$, resp). We define 
\begin{equation}
F(z):=\la H^zP_c(-\Delta)^{-z}f, g\ra_{L^2}=\la (-\Delta)^{-\Re z-i\Im z}f, H^{-i\Im z}H^{\Re z}g\ra_{L^2}.
\end{equation}
Indeed, $F(z)$ is well-defined, since $(-\Delta)^{-\Re z-i\Im z}f, H^{-i\Im z}H^{\Re z}g\in L^2$. Moreover, $F(z)$ is continuous on $S=\{z: 0\leq\Re z\leq 1\}\subset\mathbb{C}$, and it is analytic in the interior of $S$. We claim that $HP_c(-\Delta)^{-1}$ is bounded on $L^r$ for $1<r<\frac{3}{2}$. Indeed, by Lemma 3.6 $(i)$, 
\begin{equation}
\|HP_c(-\Delta)^{-1}f\|_{L^r}\lesssim\|(-\Delta+V)(-\Delta)^{-1}f\|_{L^r}\leq \|f\|_{L^r}+\|V(-\Delta)^{-1}f\|_{L^r}.
\end{equation}
By the H\"older inequality (Lemma A.2) and the Sobolev inequality in the Lorentz norms (Corollary A.6), we have
\begin{equation}
\|V(-\Delta)^{-1}f\|_{L^r}\leq \|V\|_{L^{3/2,\infty}}\|(-\Delta)^{-1}f\|_{L^{\frac{3r}{3-2r},r}}\lesssim\|f\|_{L^r}.
\end{equation}
Hence, by the claim and Lemma \ref{lem:ImaginaryPower}, we get
\begin{align}
|F(1+i\alpha)|&\leq\|H^{1+i\alpha}P_c (-\Delta)^{-1-i\alpha}f\|_{L^r}\|g\|_{L^{r'}}\lesssim \la\alpha\ra^6\|f\|_{L^r}\|g\|_{L^{r'}},&&(1<r<\tfrac{3}{2}),\\
|F(i\alpha)|&\leq\|H^{i\alpha}P_c(-\Delta)^{-i\alpha}f\|_{L^r}\|g\|_{L^{r'}}\lesssim \la\alpha\ra^6\|f\|_{L^r}\|g\|_{L^{r'}},&&(1<r<\infty).
\end{align}
Therefore \eqref{two norm estimates1} follows from the Stein's complex interpolation theorem.\\
\eqref{two norm estimates2}: Pick $f$ and $g$ as above, and consider
\begin{equation}
G(z):=\la (-\Delta)^zH^{-z}P_c g, f\ra_{L^2}.
\end{equation}
We claim that $(-\Delta)H^{-1}P_cg$ is bounded on $L^r$ for $1<r<\frac{3}{2}$. By the triangle inequality,
\begin{equation}
\|(-\Delta)H^{-1}P_cg\|_{L^r}=\|(H-V)H^{-1}P_cg\|_{L^r}\leq\|P_cg\|_{L^r}+\|VH^{-1}P_cg\|_{L^r}.
\end{equation}
By Lemma 3.6 $(i)$, $\|P_cg\|_{L^r}\lesssim\|g\|_{L^r}$. By the H\"older inequality in the Lorentz norms (Lemma A.2) and the Sobolev inequality associated with $H$ \cite[Theorem 1.9]{H1}, we get
\begin{equation}
\|VH^{-1}P_cg\|_{L^r}\leq \|V\|_{L^{3/2,\infty}}\|H^{-1}P_cg\|_{L^{\frac{3r}{3-2r},r}}\lesssim \|V\|_{L^{3/2,\infty}}\|g\|_{L^r}.
\end{equation}
Repeating the above argument with the complex interpolation, we complete the proof.
\end{proof}

\subsection{Local well-posedness}
Now we are ready to show the local well-posedness (LWP) of a 3d quintic nonlinear Schr\"odinger equation
\begin{equation}\tag{$\textup{NLS}_V$}
iu_t+\Delta u-Vu\pm|u|^4u=0;\ u(0)=u_0.
\end{equation}

\begin{theorem}[LWP]  If $V\in\mathcal{K}_0\cap L^{3/2,\infty}$ and $H$ has no eigenvalue or resonance on $[0,+\infty)$, then $(\textup{NLS}_V)$ is locally well-posed in $\dot{H}^1$. Precisely, for $A>0$, there exists $\delta=\delta(A)>0$ such that for an initial data $u_0\in\dot{H}^1$ obeying
\begin{equation}
\|\nabla u_0\|_{L^2}\leq A\textup{ and }\|e^{-itH} u_0\|_{L_{t\in [0,T_0]}^{10}L_x^{10}}<\delta,
\end{equation}
$(\textup{NLS}_V)$ has a unique solution $u\in C_t(I; \dot{H}_x^1)$, with $I=[0,T)\subset [0,T_0]$, such that
\begin{equation}
\|\nabla u\|_{L_{t\in I}^{10}L_x^{30/13}}<\infty\textup{ and }\|u\|_{L_{t\in I}^{10}L_x^{10}}<2\delta.
\end{equation}
\end{theorem}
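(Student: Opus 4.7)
The plan is a standard contraction mapping argument for the Duhamel map
\[
\Phi(u)(t) = e^{-itH}u_0 \mp i\int_0^t e^{-i(t-s)H}\bigl(|u|^4 u\bigr)(s)\,ds,
\]
performed in the complete metric space
\[
X := \Big\{\, u : \|u\|_{L_{t,x}^{10}(I\times\mathbb{R}^3)} \le 2\delta,\ \|\nabla u\|_{L_t^{10}L_x^{30/13}(I)} \le M \,\Big\},
\]
with $M=M(A)$ and $\delta$ as in the statement, endowed with the weak metric $d(u,v)=\|u-v\|_{L_{t,x}^{10}}$. Because $\nabla$ does not commute with $e^{-itH}$, the crucial trick (as in the subcritical case \cite{H2}) is to measure derivatives via the commuting operator $H^{1/2}P_c$ and then translate back to $\nabla$ using the homogeneous norm equivalence of Proposition \ref{cor:Riesz}, which is the only place where Theorem \ref{MainTheorem} is used. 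The discrete projection $P_du$, a finite sum of smooth exponentially decaying eigenfunctions (Lemma \ref{lem:Eigenfunctions}), is subcritical and is absorbed separately.

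Applying $H^{1/2}P_c$ to the Duhamel formula, commutation with $e^{-itH}$ and the bi-Strichartz estimate (which follows from the dispersive bound $(1.3)$ via the Keel--Tao argument indicated in the proof of Proposition \ref{prop:StrichartzEstimate}, using the admissible pairs $(10,30/13)$ on the left and $(2,6)$ on the right) give
\[
\|H^{1/2}P_c u\|_{L_t^{10}L_x^{30/13}} \lesssim \|H^{1/2}P_c u_0\|_{L_x^2} + \|H^{1/2}P_c(|u|^4u)\|_{L_t^{2}L_x^{6/5}}.
\]
Since $2,\,6/5,\,30/13$ all lie in $(1,3)$, Proposition \ref{cor:Riesz} converts each $H^{1/2}P_c$ into $|\nabla|$ to yield
\[
\|\nabla P_c u\|_{L_t^{10}L_x^{30/13}} \lesssim \|\nabla u_0\|_{L_x^2} + \|\nabla(|u|^4 u)\|_{L_t^{2}L_x^{6/5}}.
\]
Using the pointwise bound $|\nabla(|u|^4u)|\lesssim |u|^4|\nabla u|$, a spatial H\"older inequality with exponents $4/10+13/30=5/6$ followed by H\"older in time with $4/10+1/10=1/2$ produces
\[
\|\nabla(|u|^4 u)\|_{L_t^{2}L_x^{6/5}} \lesssim \|u\|_{L_{t,x}^{10}}^{4}\,\|\nabla u\|_{L_t^{10}L_x^{30/13}}.
\]
Applying the Sobolev embedding $\dot W^{1,30/13}(\mathbb{R}^3)\hookrightarrow L^{10}(\mathbb{R}^3)$ to $\Phi(u)-e^{-itH}u_0$ and invoking the smallness hypothesis $\|e^{-itH}u_0\|_{L_{t,x}^{10}}<\delta$, one gets
\[
\|\Phi(u)\|_{L_{t,x}^{10}} \le \delta + C(2\delta)^4 M, \qquad \|\nabla P_c \Phi(u)\|_{L_t^{10}L_x^{30/13}} \le CA + C(2\delta)^4 M.
\]
Choosing $M=2CA$ and then $\delta=\delta(A)$ small enough that $16C(2\delta)^4\le 1$ and $32CA\delta^3\le 1$ makes $\Phi(X)\subset X$, and the pointwise estimate $\bigl||u|^4u-|v|^4v\bigr|\lesssim(|u|^4+|v|^4)|u-v|$ yields $d(\Phi(u),\Phi(v))\le\tfrac12 d(u,v)$ by the same calculation.

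The discrete projection is dealt with as follows. Writing $P_d u=\sum_j c_j(t)\psi_j$ and pairing the Duhamel identity against each $\psi_j$ yields an ODE for $c_j$ whose forcing is $\mp i\la |u|^4u,\psi_j\ra$. Because $\psi_j\in L^p$ and $\nabla\psi_j\in L^q$ for all relevant exponents by Lemma \ref{lem:Eigenfunctions}, and because $\dot H^1\hookrightarrow L^6$ in three dimensions makes the initial pairing $\la u_0,\psi_j\ra$ meaningful, the coefficients $c_j$ are controlled in $L_t^\infty$ by $A$ plus a quantity of order $T^{1/2}\|u\|_{L_{t,x}^{10}}^{5}$, which is negligible on the short interval $[0,T)$. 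Banach's fixed point theorem on $X$ then produces the unique solution; persistence of regularity and continuity in $t$ are standard. The principal obstacle, and the very reason the spectral multiplier theorem is needed here, is that the scaling-invariant nonlinear estimate above forces the use of the \emph{homogeneous} equivalence $\|\nabla P_c f\|_{L^r}\sim\|H^{1/2}P_c f\|_{L^r}$ on $1<r<3$; the inhomogeneous Lemma 1.7 used in the energy-subcritical setting is useless here because the extra $(a+H)$ factors break scaling and prevent closure of the contraction.
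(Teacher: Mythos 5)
Your scheme is essentially the paper's: a contraction mapping for the Duhamel map on the ball defined by $\|u\|_{L_{t,x}^{10}}$ and $\|\nabla u\|_{L_t^{10}L_x^{30/13}}$, Strichartz estimates for $e^{-itH}P_c$ with the pair $(10,30/13)$ and the dual endpoint $L_t^2L_x^{6/5}$, the homogeneous equivalence of Proposition \ref{cor:Riesz} (the only input from Theorem \ref{MainTheorem}) to trade $H^{1/2}P_c$ for $\nabla$, the nonlinear estimate $\|\nabla(|u|^4u)\|_{L_t^2L_x^{6/5}}\lesssim\|u\|_{L_{t,x}^{10}}^4\|\nabla u\|_{L_t^{10}L_x^{30/13}}$, and a separate treatment of the finitely many bound states via Lemma \ref{lem:Eigenfunctions} on a short interval. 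Your two deviations are harmless: you use the classical embedding $\dot W^{1,30/13}\hookrightarrow L^{10}$ on the full Duhamel term where the paper uses the Sobolev inequality associated with $H$ inside the $P_c$ Duhamel integral, and you pair the nonlinearity with $\psi_j$ directly in $L^2$ (gaining $T^{1/2}\|u\|_{L_{t,x}^{10}}^5$) where the paper pairs $|\nabla|(|v|^4v)$ against $|\nabla|^{-1}\psi_j$; both routes close.

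The one step that would fail as written is the contraction estimate in your chosen metric $d(u,v)=\|u-v\|_{L_{t,x}^{10}}$. Since $(10,10)$ is not an admissible pair, the only access to $\|\Phi(u)-\Phi(v)\|_{L_{t,x}^{10}}$ with the tools at hand is through a derivative (Sobolev embedding, or the $H$-Sobolev inequality), i.e.\ through $\|\nabla(\Phi(u)-\Phi(v))\|_{L_t^{10}L_x^{30/13}}$ and hence $\|\nabla(|u|^4u-|v|^4v)\|_{L_t^2L_x^{6/5}}$; the undifferentiated pointwise bound $\bigl||u|^4u-|v|^4v\bigr|\lesssim(|u|^4+|v|^4)|u-v|$ does not control this, because the gradient of the difference of nonlinearities contains $(|u|^4+|v|^4)|\nabla(u-v)|$, and $\|\nabla(u-v)\|_{L_t^{10}L_x^{30/13}}$ is not controlled by your metric. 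So ``by the same calculation'' does not yield $d(\Phi(u),\Phi(v))\le\tfrac12 d(u,v)$. Two standard repairs: contract in the full metric (sum of both norms), which is what the paper implicitly does --- the difference estimate then produces the constants $Ca^4$ and $Ca^3b$, which is precisely why the paper arranges $Ca^3b\le\tfrac12$; or keep the weak-metric trick but take $d(u,v)=\|u-v\|_{L_t^{10}L_x^{30/13}}$, for which $\||u|^4u-|v|^4v\|_{L_t^2L_x^{6/5}}\lesssim(\|u\|_{L_{t,x}^{10}}^4+\|v\|_{L_{t,x}^{10}}^4)\|u-v\|_{L_t^{10}L_x^{30/13}}$ closes without differentiating the difference (the discrete part being handled as before). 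Finally, the assertion that continuity in $t$ is ``standard'' glosses over the same non-commutation issue that motivates the whole argument; the paper's Step 2 proves $u\in C_t(I;\dot H^1)$ by again invoking the norm equivalence and the eigenfunction bounds, and some such argument should be recorded.
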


\begin{proof}
\textit{(Step 1. Contraction mapping argument)}
Let $\psi_j$ be the eigenfunction corresponding to the negative eigenvalue $\lambda_j$ normalized so that $\|\psi_j\|_{L^2}=1$. Choose small $T\in(0,T_0)$ such that $\|\psi_j\|_{L_{t\in I}^{10}L_x^{10}}, \|\psi_j\|_{L_{t\in I}^2L_x^2}\leq 1$ for all $j$, where $I=[0,T]$ and $\psi_j(t,x)=\psi_j(x)$ for all $t\in I$. For notational convenience, we omit the time interval $I$ in the norm $\|\cdot\|_{L_{t\in I}^p}$ if there is no confusion. Following a standard contraction mapping argument \cite{Caz, Tao1}, we aim to show that
\begin{equation}
\Phi_{u_0}(v)(t):=e^{-itH}u_0\pm i\int_0^t e^{-i(t-s)H}(|v|^4v(s))ds
\end{equation}
is a contraction map on the set
\begin{equation}
B_{a,b}:=\{v: \|v\|_{L_{t,x}^{10}}\leq a,\ \|\nabla v\|_{L_t^{10}L_x^{30/13}}\leq b\},
\end{equation}
equipped with the metric $d(u,v)=\|u-v\|_{L_{t,x}^{10}}+\|\nabla(u-v)\|_{L_t^{10}L_x^{30/13}}$, where $a,b$ and $\delta$ will be chosen later.

We claim that $\Phi_{u_0}$ maps from $B_{a,b}$ to itself. We write
\begin{equation}
\begin{aligned}
\|\Phi_{u_0}(v)\|_{L_{t,x}^{10}}&\leq\|e^{-itH}u_0\|_{L_{t,x}^{10}}+\Big\|\int_0^t e^{-i(t-s)H}P_c(|v|^4v(s))ds\Big\|_{L_{t,x}^{10}}\\
&\quad\quad+\sum_{j=1}^J\Big\|\int_0^t e^{-i(t-s)H}(\la |v|^4v(s),\psi_j\ra_{L^2}\psi_j)ds\Big\|_{L_{t,x}^{10}}\\
&=I+II+\sum_{j=1}^JIII_j.
\end{aligned}
\end{equation}
By assumption, $I\leq\delta$. For $II$, by the Sobolev inequality associated with $H$ \cite[Theorem 1.6]{H1}, Strichartz estimates (Proposition 1.2) and the two norm estimates, we get
\begin{align}
II&\lesssim\Big\|\int_0^t e^{-i(t-s)H}P_cH^{1/2}(|v|^4v(s))ds\Big\|_{L_t^{10}L_x^{30/13}}\lesssim\|H^{1/2}P_c(|v|^4v)\|_{L_t^2L_x^{6/5}}\nonumber\\
&\lesssim\|\nabla (|v|^4v)\|_{L_t^2L_x^{6/5}}\leq3\|(v^2\nabla v)(\bar{v})^2\|_{L_t^2L_x^{6/5}}+2\|(v^2\nabla v)(\bar{v})^2\|_{L_t^2L_x^{6/5}}\\
&\lesssim\|v\|_{L_{t,x}^{10}}^4\|\nabla v\|_{L_t^{10}L_x^{30/13}}\leq a^4b.\nonumber
\end{align}
For the last term, by the H\"older inequality, the choice of $T$ and $(7.17)$, we obtain
\begin{equation}
\begin{aligned}
III_j&=\Big\|\int_0^t e^{-i(t-s)\lambda_j}(\la |v|^4v(s),\psi_j\ra_{L^2}\psi_j)ds\Big\|_{L_{t,x}^{10}}\\
&\leq\Big(\int_0^T |\la |v|^4v(s),\psi_j\ra_{L^2}|ds\Big)\|\psi_j\|_{L_{t,x}^{10}}\\
&\leq \|\nabla(|v|^4v)\|_{L_t^2 L_x^{6/5}}\||\nabla|^{-1}\psi_j\|_{L_t^2 L_x^6}\\
&\lesssim\|\nabla(|v|^4v)\|_{L_t^2 L_x^{6/5}}\|\psi_j\|_{L_t^2 L_x^2}\leq a^4b.
\end{aligned}
\end{equation}
Therefore, we prove that 
\begin{equation}
\|\Phi_{u_0}(v)\|_{L_{t,x}^{10}}\leq\delta+Ca^4b.
\end{equation}
Next, we write
\begin{equation}
\begin{aligned}
\|\nabla\Phi_{u_0}(v)\|_{L_t^{10}L_x^{30/13}}&\leq\|\nabla P_c\Phi_{u_0}(v)\|_{L_t^{10}L_x^{30/13}}+\sum_{j=1}^J\|\nabla P_{\lambda_j}\Phi_{u_0}(v)\|_{L_t^{10}L_x^{30/13}}\\
&=\tilde{I}+\sum_{j=1}^J\tilde{II}_j.
\end{aligned}
\end{equation}
For $\tilde{I}$, by the two norm estimates, Strichartz estimates and $(7.17)$, we obtain
\begin{equation}
\begin{aligned}
\tilde{I}&\lesssim\|H^{1/2}P_c\Phi_{u_0}(v)\|_{L_t^{10}L_x^{30/13}}\\
&\lesssim\|H^{1/2}P_cu_0\|_{L^2}+\|H^{1/2}P_c(|v|^4v)\|_{L_t^2L_x^{6/5}}\\
&\lesssim\|\nabla u_0\|_{L^2}+\|H^{1/2}P_c(|v|^4v)\|_{L_t^2L_x^{6/5}}\lesssim A+a^4b.
\end{aligned}
\end{equation}
For $\tilde{II}$, by the H\"older inequality, $(7.19)$ and Lemma \ref{eigenfunctions}, we get
\begin{equation}
\begin{aligned}
\tilde{II}_j&\leq\|\la\Phi_{u_0}(v),\psi_j\ra_{L^2}\|_{L_t^{10}}\|\psi_j\|_{L^{30/13}}\\
&\leq\|\Phi_{u_0}(v)\|_{L_{t,x}^{10}}\|\psi_j\|_{L_x^{10/9}}\lesssim \delta+a^4b.
\end{aligned}
\end{equation}
Collecting all, we prove that
\begin{equation}
\|\nabla\Phi_{u_0}(v)\|_{L_t^{10}L_x^{30/13}}\leq CA+Ca^4b.
\end{equation}
Let $b=2AC$, $a=\min((2C)^{-\frac{1}{4}}, (2Cb)^{-\frac{1}{3}})$ and $\delta=\frac{a}{2}$ $(\Rightarrow Ca^4b\leq AC$ and $Ca^3b\leq\frac{1}{2})$. Then, by $(7.19)$ and $(7.23)$, $\Phi_{u_0}$ maps from $B_{a,b}$ to itself. Similarly, one can show that $\Phi_{u_0}$ is contractive in $B_{a,b}$. Thus, we conclude that there exists unique $u\in B_{a,b}$ such that
\begin{equation}
u(t)=\Phi_{u_0}(u)=e^{-itH}u_0+i\int_0^t e^{-i(t-s)H} (|u|^4u)(s)ds.
\end{equation}
\textit{(Step 2. Continuity)} In order to show that $u(t)\in C_t(I; \dot{H}_x^1)$, we write 
\begin{equation}
\begin{aligned}
u(t)&=e^{-itH}(P_cu_0+\sum_{j=1}^JP_{\lambda_j}u_0)\\
&\quad\pm i\int_0^t e^{-i(t-s)H} \Big(P_c(|u|^4u)(s)+\sum_{j=1}^J P_{\lambda_j}(|u|^4u)(s)\Big) ds\\
&=e^{-itH}P_cu_0+\sum_{j=1}^J e^{-it\lambda_j}P_{\lambda_j}u_0\pm i\int_0^t e^{-i(t-s)H} P_c(|u|^4u)(s)ds\\
&\ \quad\quad\quad\quad\quad\pm i\sum_{j=1}^J \int_0^t e^{-i(t-s)\lambda_j}P_{\lambda_j}(|u|^4u)(s) ds\\
&=:I(t)+\sum_{j=1}^JII_j(t)+III(t)+\sum_{j=1}^JIV_j(t).
\end{aligned}
\end{equation}
For $I(t)$, by the two norm estimates and $L^2$-continuity of $e^{-itH}$, we have
\begin{equation}
\begin{aligned}
\|I(t)-I(t_0)\|_{\dot{H}^1}\lesssim\|(e^{-itH}-e^{-it_0H})H^{1/2}P_cu_0\|_{L^2}\to0\quad\textup{as }t\to t_0,
\end{aligned}
\end{equation}
since $\|H^{1/2}P_cu_0\|_{L^2}\lesssim\|u\|_{\dot{H}^1}<\infty$. $II_j(t)$ is continuous in $\dot{H}^1$, since 
\begin{equation}
\|P_{\lambda_j}u_0\|_{\dot{H}^1}=|\la u_0,\psi_j\ra_{L^2}|\|\psi_j\|_{\dot{H}^1}\lesssim\|u_0\|_{\dot{H}^1}\|\psi_j\|_{\dot{H}^{-1}}\lesssim\|u_0\|_{\dot{H}^1}\|\psi_j\|_{L^{6/5}}<\infty.
\end{equation}
For $III(t)$, by the two norm estimates, Strichartz estimates and $(7.17)$, we have
\begin{equation}
\begin{aligned}
\|III(t)-III(t_0)\|_{\dot{H}^1}&\lesssim\|H^{1/2}(III(t)-III(t_0))\|_{L^2}\\
&\lesssim\|H^{1/2}P_c(|u|^4u)\|_{L_{s\in [t_0, t]}^2 L_x^{6/5}}\to 0\quad\textup{as }t\to t_0.
\end{aligned}
\end{equation}
For $IV_j(t)$, by the H\"older inequality and $(7.17)$, we write
\begin{equation}
\begin{aligned}
\|IV_j(t)-IV_j(t_0)\|_{\dot{H}^1}&\leq \|\psi_j\|_{\dot{H}^1}\|\nabla(|u|^4u)(s)\|_{L_{t\in [t_0,t]}^2L_x^{6/5}}\||\nabla|^{-1}\psi_j\|_{L_{s\in [t_0,t]}^2L_x^6}\\
&\lesssim\|\nabla(|u|^4u)(s)\|_{L_{s\in [t_0,t]}^2L_x^{6/5}}\|\psi_j\|_{L_t^2L_x^2}\to0\quad\textup{as }t\to t_0.
\end{aligned}
\end{equation}
Collecting all, we conclude that $u(t)$ is continuous in $\dot{H}^1$.
\end{proof}

\appendix

\section{Lorentz Spaces and Interpolation Theorem}

Following \cite{Tao1}, we summarize useful properties of the Lorentz spaces. Let $(X,\mu)$ be a measure space. The Lorentz (quasi) norm is defined by
\begin{equation}
\|f\|_{\tilde{L}^{p,q}}:=\left\{\begin{aligned}
&p^{1/q} \|\lambda\mu(\{|f|\geq\lambda\})^{1/p}\|_{L^q((0,+\infty),\tfrac{d\lambda}{\lambda})}&&\textup{ when }1\leq p<\infty\textup{ and }1\leq q\leq\infty;\\
&\|f\|_{L^\infty}&&\textup{ when }p=q=\infty.
\end{aligned}
\right.
\end{equation}

\begin{lemma}[Properties of the Lorentz spaces] Let $1\leq p\leq\infty$ and $1\leq q, q_1, q_2\leq\infty$.\\
$(i)$ $L^{p,p}=L^p$, and $L^{p,\infty}$ is the weak $L^p$-space.\\
$(ii)$ If $q_1\leq q_2$, $L^{p,q_1}\subset L^{p,q_2}$.
\end{lemma}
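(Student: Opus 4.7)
The plan is to establish each part directly from the definition of the Lorentz quasi-norm, with part (ii) requiring a standard monotonicity argument to pass through an intermediate $L^{p,\infty}$ estimate.

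For part (i), the identification $L^{p,p}=L^p$ follows from the layer-cake formula. Writing $h(\lambda):=\mu(\{|f|\geq\lambda\})$, one has $\int_X|f|^p\,d\mu = p\int_0^\infty \lambda^{p-1}h(\lambda)\,d\lambda = p\int_0^\infty \bigl(\lambda\,h(\lambda)^{1/p}\bigr)^p\,\frac{d\lambda}{\lambda}$, which is exactly $\|f\|_{\tilde L^{p,p}}^p$ after accounting for the prefactor $p^{1/p}$. The identification of $L^{p,\infty}$ with the weak $L^p$ space is immediate: the $L^\infty((0,\infty),d\lambda/\lambda)$ norm is just the essential supremum in $\lambda$, and the normalizing factor degenerates to $p^{1/\infty}=1$ (consistent with the convention made in the $p=q=\infty$ line of the definition).

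For part (ii), I will first treat the endpoint case $q_2=\infty$ and then bootstrap. Set $g(\lambda):=\lambda\,h(\lambda)^{1/p}$, so that $\|f\|_{\tilde L^{p,q_1}}$ is comparable to $\|g\|_{L^{q_1}((0,\infty),d\lambda/\lambda)}$. Since $h$ is non-increasing, for any $\lambda>0$ and any $s\in[\lambda/2,\lambda]$ we have $h(s)\geq h(\lambda)$, hence $g(s)=s\,h(s)^{1/p}\geq \tfrac{1}{2}g(\lambda)$. Therefore
\begin{equation*}
\int_{\lambda/2}^{\lambda} g(s)^{q_1}\,\frac{ds}{s}\;\geq\; \Bigl(\tfrac{g(\lambda)}{2}\Bigr)^{q_1}\log 2,
\end{equation*}
which yields $g(\lambda)\lesssim_{q_1}\|f\|_{\tilde L^{p,q_1}}$ uniformly in $\lambda$, i.e.\ $\|f\|_{L^{p,\infty}}\lesssim \|f\|_{L^{p,q_1}}$. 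For the general case $q_1<q_2<\infty$, I simply factor out the supremum:
\begin{equation*}
\|f\|_{\tilde L^{p,q_2}}^{q_2} \;=\; p\int_0^\infty g(\lambda)^{q_2}\,\frac{d\lambda}{\lambda} \;\leq\; \|g\|_{L^\infty}^{q_2-q_1}\cdot p\int_0^\infty g(\lambda)^{q_1}\,\frac{d\lambda}{\lambda} \;\lesssim\; \|f\|_{L^{p,q_1}}^{q_2-q_1}\,\|f\|_{L^{p,q_1}}^{q_1},
\end{equation*}
giving the embedding $L^{p,q_1}\subset L^{p,q_2}$. The case $q_1=q_2$ is trivial.

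The proof involves no genuine obstacle; the only care required is in the endpoint bookkeeping, namely ensuring the argument for the intermediate $L^{p,\infty}$ bound exploits the monotonicity of the distribution function correctly (this is what justifies passing from an integral norm to a supremum), and keeping track of the normalizing constants $p^{1/q}$ when $q=\infty$. Since the statement is used only qualitatively elsewhere in the paper, the implicit constants in the embeddings are harmless.
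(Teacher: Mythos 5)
Your proof is correct. The paper itself offers no proof of this lemma -- it is stated as a standard fact imported from Tao's notes -- so there is no internal argument to compare against; your argument is the canonical one. Two small remarks on bookkeeping. First, in part (i) the layer-cake identity you invoke uses the distribution function $\mu(\{|f|>\lambda\})$ while the paper's definition uses $\mu(\{|f|\geq\lambda\})$; these can differ at the (at most countably many) atoms of $|f|$'s distribution, but that is a Lebesgue-null set of $\lambda$'s, so the $d\lambda/\lambda$-integrals agree and the identity $\|f\|_{\tilde L^{p,p}}=\|f\|_{L^p}$ is exact, not merely an equivalence. Second, your intermediate estimate $g(\lambda)\lesssim_{q_1}\|f\|_{\tilde L^{p,q_1}}$ implicitly absorbs the $p^{1/q_1}$ prefactor; since the lemma is only asserting a continuous embedding, this is harmless, but one could note that tracking constants gives $\|f\|_{\tilde L^{p,q_2}}\leq C(p,q_1,q_2)\|f\|_{\tilde L^{p,q_1}}$ with an explicit $C$ coming from the $2^{q_1}/\log 2$ factor. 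Finally, the edge case $p=\infty$ is degenerate in the paper's convention (only $L^{\infty,\infty}=L^\infty$ is defined there), so your restriction to $p<\infty$ in the substance of the argument is appropriate.
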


\begin{lemma}[H\"older inequality] If $1\leq p, p_1, p_2, q,q_1,q_2\leq\infty$, $\frac{1}{p}=\frac{1}{p_1}+\frac{1}{p_2}$ and $\frac{1}{q}=\frac{1}{q_1}+\frac{1}{q_2}$, then
\begin{equation}
\|fg\|_{L^{p,q}}\lesssim\|f\|_{L^{p_1,q_1}}\|fg\|_{L^{p_2,q_2}}.
\end{equation}
\end{lemma}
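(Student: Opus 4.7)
The plan is to prove H\"older through the decreasing rearrangement $f^*(t):=\inf\{\lambda>0:\mu(\{|f|>\lambda\})\leq t\}$, which gives the equivalent Lorentz quasi-norm $\|f\|_{L^{p,q}}\sim\|t^{1/p}f^*(t)\|_{L^q((0,\infty),dt/t)}$ for $1\le p<\infty$. The equivalence is just a layer-cake change of variables between the distribution function $d_f(\lambda)=\mu(\{|f|\ge\lambda\})$ and $f^*$, which is what the normalizing factor $p^{1/q}$ in the definition of $\|\cdot\|_{\tilde L^{p,q}}$ accounts for; I would record this norm equivalence first so that the rest of the argument can be carried out entirely on the $f^*$ side.

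The key pointwise ingredient is the subadditivity $(fg)^*(s+t)\leq f^*(s)\,g^*(t)$ for $s,t>0$. To obtain it, note that for any $\alpha,\beta>0$ one has $\{|fg|>\alpha\beta\}\subset\{|f|>\alpha\}\cup\{|g|>\beta\}$, hence $d_{fg}(\alpha\beta)\leq d_f(\alpha)+d_g(\beta)$; taking $\alpha\downarrow f^*(s)$ and $\beta\downarrow g^*(t)$ and using right-continuity of $d$ gives $d_{fg}(f^*(s)g^*(t))\leq s+t$, which is exactly the claimed rearrangement bound. Specializing to $s=t/2$ yields $(fg)^*(t)\leq f^*(t/2)\,g^*(t/2)$.

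Once this is in hand, the proof finishes by a direct application of classical H\"older on the measure space $((0,\infty),dt/t)$. Using $\frac{1}{p}=\frac{1}{p_1}+\frac{1}{p_2}$ to split $t^{1/p}=t^{1/p_1}\,t^{1/p_2}$, and $\frac{1}{q}=\frac{1}{q_1}+\frac{1}{q_2}$,
\begin{align*}
\|fg\|_{L^{p,q}}&\sim\|t^{1/p}(fg)^*(t)\|_{L^q(dt/t)}
\leq\|t^{1/p_1}f^*(t/2)\cdot t^{1/p_2}g^*(t/2)\|_{L^q(dt/t)}\\
&\leq\|t^{1/p_1}f^*(t/2)\|_{L^{q_1}(dt/t)}\,\|t^{1/p_2}g^*(t/2)\|_{L^{q_2}(dt/t)}.
\end{align*}
The change of variable $s=t/2$ in each factor introduces only a universal constant and produces $\|f\|_{L^{p_1,q_1}}\|g\|_{L^{p_2,q_2}}$, completing the bound.

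The main bookkeeping obstacle will be the endpoint cases where one of $p,q,p_i,q_i$ equals $\infty$: if $p_i=\infty$ then $L^{p_i,q_i}$ must be read as $L^\infty$ and the associated factor becomes $\|f\|_{L^\infty}$ via $f^*\leq\|f\|_{L^\infty}$, while the $q=\infty$ or $q_i=\infty$ cases use the supremum form of H\"older on $L^q(dt/t)$. Each of these reduces to a one-line check, but they must be handled separately. A secondary (routine) technical point is verifying the equivalence $\|f\|_{\tilde L^{p,q}}\sim\|t^{1/p}f^*(t)\|_{L^q(dt/t)}$ for $p<\infty$, which follows from the identity $\int_0^\infty \mathbf{1}_{\{f^*(t)>\lambda\}}\,dt=d_f(\lambda)$ together with Fubini applied to $\int(\lambda d_f(\lambda)^{1/p})^q\,d\lambda/\lambda$.
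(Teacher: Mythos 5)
The paper does not actually supply a proof of Lemma A.2: Appendix~A is explicitly a summary of background facts on Lorentz spaces cited from Tao's lecture notes, so there is no in-paper argument to compare against. Your proof via the decreasing rearrangement is the standard O'Neil approach and is correct in all its steps: the equivalence $\|f\|_{L^{p,q}}\sim\|t^{1/p}f^*(t)\|_{L^q(dt/t)}$, the pointwise rearrangement inequality $(fg)^*(s+t)\leq f^*(s)\,g^*(t)$ deduced from $d_{fg}(\alpha\beta)\leq d_f(\alpha)+d_g(\beta)$ and right-continuity, the specialization $(fg)^*(t)\leq f^*(t/2)g^*(t/2)$, and the one-dimensional H\"older on $L^q(dt/t)$ followed by the dilation $s=t/2$ all go through, with the $p_i=\infty$ and $q_i=\infty$ endpoints handled as you indicate. (One minor remark: the statement in the paper contains a typo, $\|fg\|_{L^{p_2,q_2}}$ where $\|g\|_{L^{p_2,q_2}}$ is intended; your argument proves the intended inequality.)
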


\begin{lemma}[Dual characterization of $L^{p,q}$] If $1<p<\infty$ and $1\leq q\leq\infty$, then
\begin{equation}
\|f\|_{L^{p,q}}\sim\sup_{\|g\|_{L^{p',q'}}\leq 1}\Big|\int_X f\bar{g} d\mu\Big|.
\end{equation}
\end{lemma}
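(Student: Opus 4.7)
\medskip

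\noindent\textbf{Plan.} I would split the equivalence into two inequalities: the ``easy'' direction $\sup_g|\int f\bar g\,d\mu|\lesssim \|f\|_{L^{p,q}}$, and the ``hard'' direction $\|f\|_{L^{p,q}}\lesssim \sup_g|\int f\bar g\,d\mu|$.

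\medskip

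\noindent\textbf{Easy direction.} This is an immediate consequence of Lemma A.2 (H\"older on Lorentz spaces) together with Lemma A.1$(i)$. Applying H\"older with the conjugate exponent pairs $\tfrac1p+\tfrac1{p'}=1$ and $\tfrac1q+\tfrac1{q'}=1$ yields
$$\|f\bar g\|_{L^{1,1}}\lesssim \|f\|_{L^{p,q}}\|g\|_{L^{p',q'}},$$
and since $L^{1,1}=L^1$, we have $|\int_X f\bar g\,d\mu|\le \|f\bar g\|_{L^1}\lesssim \|f\|_{L^{p,q}}\|g\|_{L^{p',q'}}$. Taking the supremum over $\|g\|_{L^{p',q'}}\le 1$ finishes this direction.

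\medskip

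\noindent\textbf{Hard direction.} For the converse, I would pass to the decreasing rearrangement $f^*(t):=\inf\{\lambda>0:\mu(\{|f|>\lambda\})\le t\}$. A change of variables $\lambda\mapsto d_f(\lambda):=\mu(\{|f|>\lambda\})$ in the definition of $\|\cdot\|_{\tilde L^{p,q}}$ gives the standard equivalent representation
$$\|f\|_{L^{p,q}}\sim \bigl\|t^{1/p}f^*(t)\bigr\|_{L^q((0,\infty),\,dt/t)}.$$
Applying ordinary $L^q$--$L^{q'}$ duality on the measure space $((0,\infty), dt/t)$, I pick $\phi\ge 0$ with $\|\phi\|_{L^{q'}(dt/t)}\le 1$ such that $\int_0^\infty t^{1/p}f^*(t)\phi(t)\,\tfrac{dt}{t}\ge \tfrac12\|t^{1/p}f^*(t)\|_{L^q(dt/t)}$. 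The key construction is then to lift $\phi$ to a function on $X$ by
$$g(x):=\operatorname{sgn}(\overline{f(x)})\,h\bigl(d_f(|f(x)|)\bigr),\qquad h(t):=t^{1/p-1}\phi(t),$$
so that $g$ is essentially constant on the level sets of $|f|$. A direct computation of the distribution function shows $g^*\sim h$, hence
$$\|g\|_{L^{p',q'}}\sim \|t^{1/p'}g^*(t)\|_{L^{q'}(dt/t)}\sim \|\phi\|_{L^{q'}(dt/t)}\lesssim 1,$$
while Fubini (layer-cake on $|f|$) gives
$$\Bigl|\int_X f\bar g\,d\mu\Bigr|=\int_0^\infty f^*(t)h(t)\,dt=\int_0^\infty t^{1/p}f^*(t)\phi(t)\,\tfrac{dt}{t}\gtrsim \|f\|_{L^{p,q}}.$$

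\medskip

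\noindent\textbf{Main obstacle.} The construction above is transparent when $(X,\mu)$ is non-atomic and $|f|$ is strictly decreasing on its level sets; then $g$ is literally a ``conjugate'' to $f$ and the Hardy--Littlewood inequality $\int_X |fg|\,d\mu\le \int_0^\infty f^*(t)g^*(t)\,dt$ saturates. The delicate point is the general case: when $d_f$ has jumps (atoms of $\mu$ or plateaus of $|f|$), the function $h\circ d_f$ is only defined up to measure-preserving rearrangement of each plateau, so one must either (a) work on the enlargement $X\times[0,1]$ with $\mu\otimes dt$ and project back, or (b) argue by density of simple functions and stability of both sides under approximation. Once this technical point is handled, the endpoint cases $q=1$ and $q=\infty$ follow from the same argument by reading $L^q$--$L^{q'}$ duality on $(0,\infty)$ in the obvious way (the hypothesis $1<p<\infty$ is used to ensure $t^{1/p-1}\phi\in L^{q'}(dt/t)$ after multiplication by the weight, and to rule out the $L^\infty$ case where duality fails).
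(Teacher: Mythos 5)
The paper does not actually prove this lemma: Appendix A is a summary of standard Lorentz-space facts quoted from the references (Tao's notes; see also Grafakos or Stein--Weiss), so there is no in-paper argument to compare yours against. Judged on its own, your outline is the standard one and is essentially sound. The easy direction is exactly Lemma A.2 together with $L^{1,1}=L^1$, as you say. For the hard direction, the route through $\|f\|_{L^{p,q}}\sim\|t^{1/p}f^*(t)\|_{L^q(dt/t)}$, one-dimensional $L^q$--$L^{q'}$ duality, and a test function $g$ built from the distribution function of $f$ is the textbook proof, and you correctly flag the genuine technical point (plateaus of $|f|$ and atoms of $\mu$, i.e.\ saturation of the Hardy--Littlewood inequality), which is handled by exactly the devices you name.

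There is, however, one step that fails as literally written: the chain $\|g\|_{L^{p',q'}}\sim\|t^{1/p'}g^*(t)\|_{L^{q'}(dt/t)}\sim\|\phi\|_{L^{q'}(dt/t)}$ presumes $g^*\sim h$ with $h(t)=t^{1/p-1}\phi(t)$, which requires $h$ to be non-increasing; an arbitrary near-optimizer $\phi$ from $L^q$--$L^{q'}$ duality need not have this property. The Lorentz norm is rearrangement invariant while the weighted norm $\|t^{1/p'}h\|_{L^{q'}(dt/t)}=\|\phi\|_{L^{q'}(dt/t)}$ is not, and in the regime $q>p$ (i.e.\ $q'<p'$) one can choose admissible $\phi$ with $\|\phi\|_{L^{q'}(dt/t)}\le1$ but concentrated far from the origin (a bump near $t\approx T$ with $T$ large) for which the lifted $g$ has $\|g\|_{L^{p',q'}}$ arbitrarily large; so the bound you need does not hold for every $\phi$ produced by abstract duality. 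The standard fixes are either to take the canonical choice $\phi(t)\approx\bigl(t^{1/p}f^*(t)\bigr)^{q-1}/\|f\|_{L^{p,q}}^{q-1}$ (for $q<\infty$) and estimate $\|g\|_{L^{p',q'}}$ directly using that $f^*$ is decreasing, or to first reduce, via the Hardy--Littlewood saturation you already invoke, to decreasing functions on $((0,\infty),dt)$ and there select a decreasing test function; for $q=\infty$ the simpler choice $g=\operatorname{sgn}(\bar f)\,1_{\{|f|>\lambda\}}\,\mu(\{|f|>\lambda\})^{-1/p'}$ with a near-extremal $\lambda$ suffices. With one of these amendments (plus the plateau/atom remedy you describe), your proposal becomes a complete proof of the stated equivalence.
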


A measurable function $f$ is called a \textit{sub-step function} of height $H$ and width $W$ if $f$ is supported on a set $E$ with measure $\mu(E)=W$ and $|f(x)|\leq H$ almost everywhere. Let $T$ be a linear operator that maps the functions on a measure space $(X,\mu_X)$ to functions on another measure space $(Y,\mu_Y)$. We say that $T$ is \textit{restricted weak-type} $(p,\tilde{p})$ if 
\begin{equation}
\|Tf\|_{L^{\tilde{p},\infty}}\lesssim HW^{1/p}
\end{equation}
for all sub-step functions $f$ of height $H$ and width $W$.

\begin{theorem}[Marcinkiewicz interpolation theorem]
Let $T$ be a linear operator such that 
\begin{equation}
\la Tf,g\ra_{L^2}=\int_Y Tf \bar{g}d\mu_Y
\end{equation}
is well-defined for all simple functions $f$ and $g$. Let $1\leq p_0,p_1, \tilde{p}_0, \tilde{p}_1\leq\infty$. Suppose that $T$ is restricted weak-type $(p_i, \tilde{p}_i)$ with constant $A_i>0$ for $i=0,1$. Then,
\begin{equation}
\|Tf\|_{L^{\tilde{p}_\theta,q}}\lesssim A_0^{1-\theta}A_1^\theta \|f\|_{L^{p_\theta,q}},\end{equation}
where $0<\theta<1$, $\frac{1}{p_\theta}=\frac{1-\theta}{p_0}+\frac{\theta}{p_1}$, $\frac{1}{\tilde{p}_\theta}=\frac{1-\theta}{\tilde{p}_0}+\frac{\theta}{\tilde{p}_1}$, $\tilde{p}_\theta>1$ and $1\leq q\leq\infty$.
\end{theorem}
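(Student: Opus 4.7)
\emph{Proof plan.} The claim is the Marcinkiewicz interpolation theorem with Lorentz-space targets and restricted weak-type hypotheses; I plan to prove it by the classical Calder\'on--Hunt atomic approach, since the hypothesis only controls sub-step functions rather than general $L^{p_i}$ inputs. The first step is to reduce to a bilinear estimate via duality. Since $\tilde p_\theta>1$, Lemma A.3 gives
$$\|Tf\|_{L^{\tilde p_\theta, q}}\sim \sup_{\|g\|_{L^{\tilde p_\theta', q'}}\leq 1}|\langle Tf, g\rangle|,$$
where the supremum runs over simple $g$ on $Y$. It then suffices to prove
$$|\langle Tf, g\rangle|\lesssim A_0^{1-\theta}A_1^\theta\|f\|_{L^{p_\theta, q}}\|g\|_{L^{\tilde p_\theta', q'}}$$
for all simple $f$ on $X$ and $g$ on $Y$.

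The second step is a dyadic atomic decomposition. I write $f=\sum_{k\in\mathbb{Z}} f_k$ with $f_k := f\cdot\chi_{\{2^{k-1}<|f|\leq 2^k\}}$, so that each $f_k$ is a sub-step function of height $2^k$ and width $W_k := \mu_X(\{2^{k-1}<|f|\leq 2^k\})$, and analogously $g=\sum_j g_j$ with heights $2^j$ and widths $W'_j$. A direct layer-cake calculation yields the atomic characterizations
$$\|f\|_{L^{p_\theta, q}}^q\sim\sum_k(2^k W_k^{1/p_\theta})^q,\quad \|g\|_{L^{\tilde p_\theta', q'}}^{q'}\sim\sum_j(2^j(W'_j)^{1/\tilde p_\theta'})^{q'}.$$

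The third step applies the restricted weak-type hypothesis atom-by-atom. For each pair $(k,j)$ and either $i\in\{0,1\}$, H\"older in Lorentz spaces (Lemma A.2) together with the restricted weak-type hypothesis at $(p_i, \tilde p_i)$ gives
$$|\langle Tf_k, g_j\rangle|\leq \|Tf_k\|_{L^{\tilde p_i,\infty}}\|g_j\|_{L^{\tilde p_i',1}}\lesssim A_i\cdot 2^k W_k^{1/p_i}\cdot 2^j(W'_j)^{1/\tilde p_i'}.$$
Taking the pointwise minimum over $i$ and applying $\min(a,b)\leq a^{1-\theta}b^\theta$ produces the desired constant $A_0^{1-\theta}A_1^\theta$ multiplied by the ``diagonal'' atomic factor $2^k W_k^{1/p_\theta}\cdot 2^j(W'_j)^{1/\tilde p_\theta'}$, weighted by a correction $\rho(k,j)\leq 1$ that decays geometrically away from the ``diagonal'' region where the two atomic bounds are balanced.

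The main obstacle and final step is the summation $\sum_{k,j}|\langle Tf_k, g_j\rangle|$: setting $a_k=2^k W_k^{1/p_\theta}$ and $b_j=2^j(W'_j)^{1/\tilde p_\theta'}$, I must establish $\sum_{k,j} a_k b_j \rho(k,j)\lesssim \|a\|_{\ell^q}\|b\|_{\ell^{q'}}$. I would partition the $(k,j)$ plane along the diagonal line and handle each region separately: on the diagonal the sum matches H\"older's inequality in $\ell^q$--$\ell^{q'}$ directly, while off-diagonal contributions are controlled by the geometric decay of $\rho$, whose rate is determined by how $1/p_\theta$ sits strictly between $1/p_0$ and $1/p_1$ (and similarly for the target exponents). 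The strict condition $0<\theta<1$ is exactly what makes these geometric series converge, and $\tilde p_\theta>1$ was already used in the duality step; the extreme cases $q\in\{1,\infty\}$ require only minor modifications in which the sequence norms become a sum or a supremum.
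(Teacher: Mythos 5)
The paper does not actually prove Theorem A.4: it is quoted as a standard fact (the restricted weak-type Marcinkiewicz theorem, cited to Tao's notes), and only Corollary A.5 is derived from it. Your outline is precisely the classical Hunt--Calder\'on argument as presented in that reference: dualize using $\tilde p_\theta>1$, decompose $f$ and $g$ into dyadic sub-step atoms, bound each pairing $\la Tf_k,g_j\ra$ by the restricted weak-type hypotheses, take the minimum over the two endpoints, and sum with geometric off-diagonal decay. So you are reproducing the standard proof rather than diverging from anything in the paper, and the skeleton is sound. Two technical points deserve care if you flesh it out. First, the atom-by-atom step $|\la Tf_k,g_j\ra|\leq\|Tf_k\|_{L^{\tilde p_i,\infty}}\|g_j\|_{L^{\tilde p_i',1}}$ uses the $L^{\tilde p_i,\infty}\times L^{\tilde p_i',1}$ pairing, which fails at $\tilde p_i=1$ (a weak-$L^1$ function need not be integrable on a set of finite measure); when one endpoint has $\tilde p_i=1$ you should instead bound $\int_{E_j'}|Tf_k|$ by a layer-cake argument that uses both endpoint distributional bounds simultaneously, which is where the hypothesis $\tilde p_\theta>1$ genuinely enters beyond the duality step. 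Second, the geometric decay of your correction factor $\rho(k,j)$ requires the two endpoint exponent pairs to be genuinely distinct ($p_0\neq p_1$ and $\tilde p_0\neq\tilde p_1$); if, say, $\tilde p_0=\tilde p_1$ the minimum gives no decay in $j$ and the double sum only closes against $\ell^1$ in $b_j$, so the strong-type conclusion is not available. This hypothesis is implicit in the statement as quoted (and harmless for the paper's application, Corollary A.5, where $p_1<p_2$), but your write-up should state it explicitly since your convergence argument depends on it.
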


In this paper, we use the interpolation theorem of the following form.
\begin{corollary}[Marcinkiewicz interpolation theorem]\label{Interpolation} Let $T$ be a linear operator. Let $1\leq p_1<p_2\leq\infty$. Suppose that for $i=0,1$, $T$ is bounded from $L^{p_i,1}$ to $L^{p_i,\infty}$. Then $T$ is bounded on $L^p$ for $p_1<p<p_2$.
\end{corollary}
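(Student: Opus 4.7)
The plan is to deduce Corollary \ref{Interpolation} directly from the restricted-weak-type version of the Marcinkiewicz interpolation theorem (Theorem A.4), by checking that the hypothesis $T : L^{p_i,1} \to L^{p_i,\infty}$ is strictly stronger than restricted weak-type $(p_i,p_i)$, and then matching indices in the conclusion.

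First I would observe that any sub-step function $f$ of height $H$ and width $W$ satisfies a uniform $L^{p_i,1}$ bound of the right form. Indeed, if $|f| \le H \mathbf{1}_E$ with $\mu_X(E) = W$, then a direct computation from the definition of the Lorentz quasi-norm (or the monotonicity in the second index together with $\|\mathbf{1}_E\|_{L^{p_i,1}} \lesssim W^{1/p_i}$) gives
\begin{equation*}
\|f\|_{L^{p_i,1}} \;\lesssim\; H\, W^{1/p_i}, \qquad i=1,2.
\end{equation*}
Combining this with the hypothesis $\|T g\|_{L^{p_i,\infty}} \le A_i \|g\|_{L^{p_i,1}}$ applied to $g=f$ yields
\begin{equation*}
\|Tf\|_{L^{p_i,\infty}} \;\lesssim\; A_i\, H\, W^{1/p_i},
\end{equation*}
so $T$ is restricted weak-type $(p_i,p_i)$ for $i=1,2$.

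Next I would invoke Theorem A.4 with $p_0=\tilde p_0 := p_1$ and $p_1' = \tilde p_1' := p_2$ (relabelling the indices to avoid clash with the theorem's notation). For any $p \in (p_1,p_2)$ there is a unique $\theta \in (0,1)$ with
\begin{equation*}
\frac{1}{p} = \frac{1-\theta}{p_1} + \frac{\theta}{p_2},
\end{equation*}
and since in our setup $p_\theta = \tilde p_\theta = p$, the conclusion of Theorem A.4 with the choice $q = p$ gives
\begin{equation*}
\|Tf\|_{L^{p,p}} \;\lesssim\; A_1^{1-\theta} A_2^{\theta}\, \|f\|_{L^{p,p}}.
\end{equation*}
By Lemma A.1 (i), $L^{p,p} = L^p$, so this is precisely the $L^p \to L^p$ bound we wanted.

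There is essentially no obstacle here: the only substantive point is the first step, the $L^{p_i,1}$-estimate for sub-step functions, which is standard and follows from unpacking the definition of $\|\cdot\|_{\tilde L^{p_i,1}}$ (the distribution function of a sub-step function of height $H$ and width $W$ is dominated by $W \mathbf{1}_{[0,H]}(\lambda)$). Everything else is a direct citation of Theorem A.4 together with the identification $L^{p,p}=L^p$. The endpoint case $p_2=\infty$ is covered by the same argument since Theorem A.4 allows $p_1=\infty$.
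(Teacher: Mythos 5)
Your proposal is correct and follows essentially the same route as the paper: you verify that a sub-step function of height $H$ and width $W$ satisfies $\|f\|_{L^{p_i,1}}\lesssim HW^{1/p_i}$, conclude restricted weak-type $(p_i,p_i)$ from the hypothesis, and then apply Theorem A.4 on the diagonal together with $L^{p,p}=L^p$. This is exactly the paper's argument, with the sub-step estimate made slightly more explicit.
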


\begin{proof}
The corollary follows from Theorem A.4, since $T$ is restricted weak-type $(p_i, p_i)$:
\begin{equation}
\|f\|_{L^{p_i,1}}=p_i\int_0^\infty \mu(|f|\geq\lambda)^{1/p_i} d\lambda\leq p_i\int_0^H W^{1/p_i}d\lambda=p_i HW^{1/p_i},
\end{equation}
for a sub-step function $f$ of height $H$ and width $W$.
\end{proof}

\begin{corollary}[Fractional integration inequality in the Lorentz spaces]\label{Fractional}
\begin{equation}
\Big\|\int_{\mathbb{R}^d}\frac{f(y)}{|x-y|^{d-s}}dy\Big\|_{L^{q,r}(\mathbb{R}^d)}\lesssim\|f\|_{L^{p,r}},
\end{equation}
where $1<p<q<\infty$, $1\leq r\leq\infty$ and $\frac{1}{q}=\frac{1}{p}-\frac{s}{d}$. At the endpoints, we have
\begin{align}
\Big\|\int_{\mathbb{R}^d}\frac{f(y)}{|x-y|^{d-s}}dy\Big\|_{L^{\frac{d}{d-s},\infty}(\mathbb{R}^d)}&\lesssim\|f\|_{L^1}, \Big\|\int_{\mathbb{R}^d}\frac{f(y)}{|x-y|^{d-s}}dy\Big\|_{L^\infty(\mathbb{R}^d)}\lesssim\|f\|_{L^{d/s,1}}.
\end{align}
\end{corollary}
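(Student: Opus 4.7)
The plan is to establish the two endpoint inequalities (A.2) first by direct inspection, and then to obtain the main bound (A.1) by Marcinkiewicz interpolation (Theorem A.4). The key observation is that the Riesz kernel $K(x):=|x|^{-(d-s)}$ lies in $L^{d/(d-s),\infty}(\mathbb{R}^d)$: the superlevel set $\{|x|^{-(d-s)}>\lambda\}$ is a ball of volume comparable to $\lambda^{-d/(d-s)}$, so $\|K\|_{L^{d/(d-s),\infty}}$ depends only on $d$ and $s$.

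For the endpoint $L^{d/s,1}\to L^\infty$, I would fix $x\in\mathbb{R}^d$ and apply H\"older's inequality in the Lorentz spaces (Lemma A.2) with the exponent relations $1=\tfrac{d-s}{d}+\tfrac{s}{d}$ and $1=\tfrac{1}{\infty}+\tfrac{1}{1}$ to obtain
$$|(K*f)(x)|\le \|K(x-\cdot)\|_{L^{d/(d-s),\infty}}\|f\|_{L^{d/s,1}}=\|K\|_{L^{d/(d-s),\infty}}\|f\|_{L^{d/s,1}},$$
and then take the supremum in $x$. For the other endpoint $L^1\to L^{d/(d-s),\infty}$, I would invoke Minkowski's integral inequality in the Lorentz space $L^{d/(d-s),\infty}$ (which admits an equivalent norm since $d/(d-s)>1$), together with translation invariance $\|K(\cdot-y)\|_{L^{d/(d-s),\infty}}=\|K\|_{L^{d/(d-s),\infty}}$, to write
$$\|K*f\|_{L^{d/(d-s),\infty}}\lesssim\int_{\mathbb{R}^d}\|K(\cdot-y)\|_{L^{d/(d-s),\infty}}|f(y)|\,dy=\|K\|_{L^{d/(d-s),\infty}}\|f\|_{L^1}.$$

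To prove (A.1), I would convert these two endpoint bounds into restricted weak-type estimates. For a sub-step function $f$ of height $H$ and width $W$, the same computation as in the proof of Corollary A.5 yields $\|f\|_{L^1}\le HW$ and $\|f\|_{L^{d/s,1}}\le(d/s)HW^{s/d}$, so the operator $Tf:=K*f$ is restricted weak-type $(1,d/(d-s))$ and $(d/s,\infty)$. Applying Theorem A.4 with $(p_0,\tilde p_0)=(1,d/(d-s))$ and $(p_1,\tilde p_1)=(d/s,\infty)$ and choosing $\theta\in(0,1)$ so that $1/p_\theta=1/p$, one computes $1/\tilde p_\theta=1/p-s/d=1/q$, so that (A.1) holds with any Lorentz second index $1\le r\le\infty$. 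The only mildly delicate step is the $L^1\to L^{d/(d-s),\infty}$ endpoint, since $L^{p,\infty}$ is merely a quasi-Banach space; the main obstacle is therefore justifying Minkowski's inequality in this setting, which is handled either by using the equivalent norm that $L^{p,\infty}$ carries for $p>1$ or by quoting Young's inequality with one factor in weak $L^p$. Beyond this, the argument is mechanical.
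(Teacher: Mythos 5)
Your proof is correct and uses the same overall skeleton as the paper (establish the two endpoint bounds in (A.2), then interpolate to obtain (A.1)), but you differ in how you reach the endpoints. The paper quotes Stein \cite[Theorem 1, p.119]{Stein} for the $L^1\to L^{d/(d-s),\infty}$ weak-type bound and then deduces the $L^{d/s,1}\to L^\infty$ bound by duality via Lemma A.3; you instead prove both endpoints directly, getting $L^{d/s,1}\to L^\infty$ from the Lorentz-space H\"older inequality (Lemma A.2) applied pointwise against the kernel $|x|^{-(d-s)}\in L^{d/(d-s),\infty}$, and $L^1\to L^{d/(d-s),\infty}$ from Minkowski's integral inequality in the equivalent Banach norm on $L^{p,\infty}$ for $p>1$ (equivalently, Young's inequality $L^1*L^{p,\infty}\to L^{p,\infty}$). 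Your route is more self-contained, avoiding an external citation at the cost of needing the normability of weak $L^p$ for $p>1$, which you flag and justify. One small point in your favor: you invoke Theorem A.4 directly with unequal source and target exponents $(p_i,\tilde p_i)$, which is what the off-diagonal bound (A.1) actually requires; the paper's reference to Corollary A.5 is slightly off, since as stated that corollary only covers the diagonal case $p_i=\tilde p_i$, and one must in fact go back to Theorem A.4 as you do. The exponent bookkeeping $(1-\theta)\tfrac{d-s}{d}=\tfrac1p-\tfrac sd=\tfrac1q$ is verified correctly.
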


\begin{proof}
$(A.9)$ follows from \cite[Theorem 1, p.119]{Stein} and duality. Then, $(A.8)$ follows from Corollary A.5.
\end{proof}

\end{document}